\documentclass[a4paper, 11pt]{article}
\usepackage{mathrsfs}
\usepackage{amsfonts}
\topmargin 0.2in \textheight=618pt
 \oddsidemargin0.4in
\textwidth=428pt
\usepackage{latexsym}
\usepackage{graphicx}
\usepackage{amsmath}
\newcommand{\new}{\newcommand*}

\new{\rnew}{\renewcommand*}
\new{\newe}{\newenvironment*}
\new{\newt}{\newtheorem}
\new{\stl}{\setlength}

\new{\bea}{\begin{eqnarray}}
\new{\eea}{\end{eqnarray}}

\new{\be}{\begin{equation}}
\new{\ee}{\end{equation}}

\new{\bean}{\begin{eqnarray*}}
\new{\eean}{\end{eqnarray*}}

\new{\no}{\nonumber}

\new{\bt}{\begin{theorem}}
\new{\et}{\end{theorem}}
\new{\bl}{\begin{lemma}}
\new{\el}{\end{lemma}}

\new{\bc}{\begin{corollary}}
\new{\ec}{\end{corollary}}
\new{\bp}{\begin{proof}\quad}
\new{\ep}{\end{proof}}
\new{\ba}{\begin{array}}
\new{\ea}{\end{array}}

\newt{proposition}{Proposition}[section]
\newt{lemma}{Lemma}[section]
\newt{conjecture}{Conjecture}[section]
\newt{corollary}{Corollary}[section]
\newt{theorem}{Theorem}[section]
\newt{assumption}{Assumption}
\newt{example}{Example}[section]
\newt{remark}{Remark}[section]
\newt{definition}{definition}[section]

\rnew{\theequation}{\thesection.\arabic{equation}}
\new{\sect}[1]{ \section{#1}
\setcounter{equation}{0} \setcounter{figure}{0} }

\newe{acknowledgment}{{\flushleft \bf Acknowledgments. }}{}
\newe{proof}{{\bf Proof.}}{\qquad\endproof}
\newe{keywords}{\begin{quote}\quad{\bf Keywords.}}{\end{quote}}
\newe{AMS}{\begin{quote}\quad {\bf AMS subject classification.}}{\end{quote}}

\def \endproof {\qquad \vrule height 5pt width 5pt depth 2pt}


\title{Carleson measures on planar sets\date{}}

\author{{Zhijian Qiu     \thanks{ }} \\
\\{\small\em  Department of  mathematics,
Southwestern University of Finance and Economics}\\
}

\begin{document}
\maketitle
\begin{abstract} In this paper, we investigate what are  Carleson measures on open subsets in
the complex plane.  A circular domain is a connected open subset
whose boundary consists of finitely many disjoint circles. We call a
domain $G$ multi-nicely connected if there exists a circular domain
$W$ and a conformal map $\psi$ from $W$ onto $G$ such that $\psi$ is
almost univalent with respect the arclength on $\partial W$.  We
characterize all Carleson measures for those open subsets so that
each of their components is multi-nicely connected and harmonic
measures of the components are mutually singular.  Our results
suggest the extend of Carleson measures probably is up to this class
of  open subsets.
\end{abstract}

\begin{keywords} Carleson measure, circular domain, harmonic measure, multi-nicely
connected domain
\end{keywords}

\begin{AMS}    30H05, 30E10, 46E15.
\end{AMS}

 \section*{Introduction}

In this paper, we extend the concept of Carleson measures to general
open subsets in the plane.  This work is the continuation of
\cite{ccarl} in which the author studies and characterizes all
Carleson measures on circular domains.

Let us begin with the Carleson measures on circular domains.  For a
domain $G$, let $R(\overline G)$ denote the uniform closure of
rational functions with poles off $\overline G$. A Carleson measure
on a circular domain is defined as:

\begin{definition}  \label {d:car}
A positive finite (Borel) measure on a circular domain $G$ is called
a Carleson measure if there exists a positive constant $C$ such that
for all $q\in [1,\infty)$
\[\{ \int_{G} |r|^{q} d\mu\}^{\frac{1}{q}} \leq C\hspace{.03in}
\{ \int_{\partial G} |r|^{q} d s\}^{\frac{1}{q}}, \hspace{.2in}
 r\in R(\overline G),
\]
where $s$ is the arclength measure on $\partial G$.
\end{definition}

Recall a Carleson square $C_{h}$ on the unit disk $D$  is a subset
of form
\[C_{h}= \{z=r e^{it}: 1-h \leq r < 1; \hspace{.04in}
t_{0} \leq t \leq t_{0}+h \}.
\]
A Carleson measure  was originally defined as a measure $\mu$ such
that $\mu(C_{h}) \leq c h$ for each Carleson square on $D$, where
$c$ is a positive constant. L. Carleson \cite{carla, carl}
established the equivalence between this definition and
Definition~\ref{d:car}.

Fortunately, it is also possible for us to define Carleson squares
on circular domains and it turns out that the definition is a
natural extension of Carleson squares on  the unit disk.

Let $G$ be a circular domain with boundary components $C_{0}$,
$C_{1}$, ... $C_{n}$ and let $C_{0}$ be the outer boundary. Let
$a_{i}$ be the center of $C_{i}$ and let $r_{i}$ be the radius of
the circle $C_{i}$ for $i=0,1,2, ... n$. Set
$f_{0}=\frac{z-a_{0}}{r_{0}}$ and $f_{i}=\frac{r_{i}}{z-a_{i}}$ for
each $i> 0$.
 We call a subset of $G$ a Carleson square if
$\overline S \cap \partial G$ is connected and if there is $ 0\leq i
\leq n$ such that $f_{i}(S)$ is a Carleson square on the unit disk
$D$.

The following two theorems are the main results in \cite{ccarl},
which characterize Carleson measures on circular domains and also
generalize a famous theorem of L. Carleson that establishes the
equivalence between the Carleson measure and the Carleson measure
inequality.

\begin{theorem} \label{t:0}
Let $G$ be a circular domain. A positive finite measure $\mu$ on $G$
is a Carleson measure if and only if there exists a positive
constant $c$ such that for each Carleson square $S \subset G$,
$\mu(S)  \leq c h,$ where $s$ is the arclength measure on $\partial
G$ and $h= s(\overline S \cap \partial G)$.
\end{theorem}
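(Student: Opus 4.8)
The plan is to prove the two implications separately, the substantial one being sufficiency. \textbf{Necessity.} Suppose $\mu$ is a Carleson measure and let $S\subset G$ be a Carleson square, so $\overline S\cap\partial G$ is an arc $I$ of some length $h$ lying on one circle, say $C_i$, and $f_i(S)$ is the Carleson square $C_{h'}$ on $D$ with base arc $f_i(I)$, where $h'=h/r_i$ since $|f_i'|\equiv 1/r_i$ on $C_i$. If $h$ exceeds a fixed threshold $h_0$ then $\mu(S)\le\mu(G)\le(\mu(G)/h_0)h$, so assume $h<h_0$. Let $w_0$ be the midpoint of $f_i(I)$, put $a=(1-h')w_0$, $g(w)=h'(1-\overline a\,w)^{-2}$, and $r_S=g\circ f_i$. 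Since each $f_i$ is a M\"obius map carrying $\overline G$ into $\overline D$ and the pole of $g$ is at $1/\overline a\notin\overline D$, the function $r_S$ is rational with poles off $\overline G$, hence $r_S\in R(\overline G)$. Using the elementary facts that $|g|$ is comparable to $1/h'$ on $C_{h'}$ and $\int_{\partial D}|g|\,ds\le 2\pi$, together with $|f_i'|\equiv 1/r_i$ on $C_i$ and the fact that $f_i(C_j)$ stays at a fixed positive distance from $\partial D$ for $j\ne i$, one checks that $|r_S|\ge c_1/h$ on $S$ and $\int_{\partial G}|r_S|\,ds\le C_1$ with $c_1,C_1$ independent of $S$, provided $h_0$ is chosen small enough relative to the radii and to the mutual distances of the circles. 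Feeding $r_S$ into Definition~\ref{d:car} with $q=1$ gives $(c_1/h)\mu(S)\le\int_G|r_S|\,d\mu\le C\int_{\partial G}|r_S|\,ds\le CC_1$, i.e. $\mu(S)\le(CC_1/c_1)h$.

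\textbf{Sufficiency: the reduction.} Assume now $\mu(S)\le c\,h$ for every Carleson square. Fix $q\in[1,\infty)$ and $r\in R(\overline G)$; then $r$ is continuous on $\overline G$, analytic on $G$, and $|r|^q$ is subharmonic and bounded on $G$. Choose $\delta>0$ so small that the collars $A_i=\{z\in G:\mathrm{dist}(z,C_i)<\delta\}$, $0\le i\le n$, are pairwise disjoint, each $A_i$ keeps a fixed positive distance from $\partial G\setminus C_i$, each $f_i(A_i)$ lies in a thin one-sided annular neighbourhood of $\partial D$, and $G=K\cup\bigcup_i A_i$ with $K=\{z\in G:\mathrm{dist}(z,\partial G)\ge\delta\}$ compact. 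On $K$ I would use the subharmonic majorization $|r(z)|^q\le\int_{\partial G}|r|^q\,d\omega_z$ together with the boundedness of the Poisson kernel of the circular domain $G$ on $K\times\partial G$ to get $\int_K|r|^q\,d\mu\le\mu(G)\,\kappa_K\int_{\partial G}|r|^q\,ds$. For each collar, push $\mu|_{A_i}$ forward by $f_i$ to a measure $\mu_i$ on $f_i(A_i)\subset D$: since $f_i^{-1}$ carries every sufficiently small Carleson square of $D$ onto a genuine Carleson square of $G$ with base-arc length scaled by $r_i$, while finiteness of $\mu$ disposes of the large squares, $\mu_i$ is a Carleson measure on $D$ whose Carleson norm is bounded in terms of $c$, $r_i$ and $\delta$ only.

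\textbf{Sufficiency: the local estimate.} It remains to bound $\int_{A_i}|r|^q\,d\mu=\int_{f_i(A_i)}|r\circ f_i^{-1}|^q\,d\mu_i$. The point is that $f_i(G)$ is the disk with finitely many holes, all at a fixed positive distance from $\partial D$, so $r\circ f_i^{-1}$ is analytic only on a one-sided annulus $\{1-\varepsilon_i<|w|<1\}$ about $\partial D$, which we have arranged to contain $f_i(A_i)$. There I would take the Laurent (Cauchy) splitting $r\circ f_i^{-1}=B_i+H_i$, with $B_i$ analytic on $\{|w|<1\}$ and $H_i$ analytic on $\{|w|>1-\varepsilon_i\}\cup\{\infty\}$ and $H_i(\infty)=0$. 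The Cauchy formula for $H_i$ together with the subharmonic control of $|r|^q$ on a circle interior to $G$ bounds $\sup_{f_i(A_i)}|H_i|$ by a constant times $(\int_{\partial G}|r|^q\,ds)^{1/q}$; the maximum principle then forces $B_i\in H^\infty(D)\subset H^q(D)$ with $\|B_i\|_{H^q}^q$ bounded by $2^q$ times a constant times $\int_{\partial G}|r|^q\,ds$; and the classical Carleson embedding theorem on $D$ (\cite{carla,carl}), applied to $B_i$ and the Carleson measure $\mu_i$ with its constant uniform in $q$, gives $\int_{f_i(A_i)}|B_i|^q\,d\mu_i$ bounded by $2^q$ times a constant times $\int_{\partial G}|r|^q\,ds$. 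Combining the $B_i$ and $H_i$ contributions on each $A_i$, summing over $i$, adjoining the estimate on $K$, and taking $q$-th roots — at which point the finitely many factors of the form $2^q$, $\mu(G)$ and $\kappa$ collapse to bounded constants — one arrives at $(\int_G|r|^q\,d\mu)^{1/q}\le C(\int_{\partial G}|r|^q\,ds)^{1/q}$ with $C$ independent of $q$ and $r$, which is exactly Definition~\ref{d:car}.

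\textbf{Where the difficulty lies.} The test-function computation and the bookkeeping of constants are routine. The hard part will be the local analysis near each boundary circle in the sufficiency proof: one must pass to the disk through the M\"obius maps $f_i$ while paying for the holes of $f_i(G)$ — handled by the $B_i$/$H_i$ splitting and the subharmonic control of $H_i$ — and one must keep every constant uniform in the exponent $q$. This last point works out precisely because the classical Carleson embedding on the disk has a $q$-uniform constant and every other loss along the way is of the harmless form $2^q$, which is killed by the final $q$-th root. By contrast, the transfer of the Carleson-square condition between $G$ and $D$ is essentially automatic, each $f_i$ being a M\"obius transformation with $|f_i'|$ constant on $C_i$.
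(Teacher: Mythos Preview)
The paper does not prove Theorem~\ref{t:0}: it is quoted, together with Theorem~\ref{t:1}, as one of ``the main results in \cite{ccarl}'' and serves here only as background for the new results on multi-nicely connected domains. There is therefore no proof in the present paper to compare your attempt against.

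On its own merits your outline is correct and is essentially the natural argument. The necessity step via the test functions $r_S=g\circ f_i$ with $g(w)=h'(1-\bar a w)^{-2}$ is the standard one, and your verification that $r_S\in R(\overline G)$ and that the boundary integral over the circles $C_j$, $j\ne i$, is harmless (because $f_i(C_j)$ stays uniformly away from $\partial D$) is right. For sufficiency, the decomposition into a compact core $K$ plus boundary collars $A_i$, pushforward by the M\"obius maps $f_i$, and the Laurent splitting $r\circ f_i^{-1}=B_i+H_i$ on a one-sided annulus is exactly how one localizes the problem to the classical disk case; the subharmonic control of $H_i$ via interior circles and the Carleson embedding for $B_i$ are both valid. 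Your attention to the $q$-uniformity is the right emphasis: the only growing factors are of the form $2^q$ or $C^q$ with $C$ absolute, and these disappear after the $q$-th root. Two small points to make explicit in a full write-up: first, when you bound $\|B_i\|_{H^q}^q$ you are using $\int_{\partial D}|r\circ f_i^{-1}|^q\,ds=r_i^{-1}\int_{C_i}|r|^q\,ds$, so the factor $r_i^{-1}$ should be tracked; second, the claim that $f_i^{-1}$ takes small Carleson squares of $D$ to Carleson squares of $G$ uses both that $f_i^{-1}(T)\subset A_i$ (so it meets only $C_i$) and the paper's definition of Carleson squares on $G$, which is tailored precisely so that this transfer is tautological.
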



\begin{theorem} \label{t:1}
Let $G$ be a circular domain with boundary components $C_{0}$,
$C_{1}$, ... $C_{n}$ and let $C_{0}$ be the outer boundary.  For
each $i=0,1,2,...$, let $a_{i}$ be the center  and let $r_{i}$ be
the radius of  $C_{i}$, respectively. Set
$f_{0}=\frac{z-a_{0}}{r_{0}}$ and $f_{i}=\frac{r_{i}}{z-a_{i}}$ for
each $i> 0$. A positive measure on $G$ is a Carleson measure if and
only if $\mu\circ f_{i}^{-1}$ is a Carleson measure on the unit disk
$D$ for each $i$.
\end{theorem}

Originally motivated by a research problem in operator theory
suggested by Ronald Douglas: Can the similarity and quasi-similarity
of the well-known results of W. Clary  be extended to multiply
connected domains? The readers may consult
 \cite{conw}[ p. 370-373] and \cite{mcc,hast,equi,quas,nsim} for the topics.
The quasi-similarity case was first  investigated in \cite{mcc} and
then later in \cite{quas}, but the similarity case was only to be
done recently in \cite{nsim} and the extension was completed because
of the research in this work.  We consider the following problem:
What are the natural extension of the Carleson measures from
circular domains to general domains. Is there a meaningful and
satisfactory extension of Carleson measures to general domains or
even to general open subsets in the plane?

Carleson measures on the unit disk (or half plane) have many
applications and connections in analysis.  So we have reasons to
believe this problem is interesting and worth of studies in its own
right. Positive results toward the problem would likely have nice
applications in the function theory and operator theory on planar
domains (for an example, see \cite{nsim}).

Before we give the definition of Carleson measures, let us recall
what are harmonic measures for a given domain in the plane.

Let $G$ be a domain in the extended complex plane such that
$\partial G$ does not contain $\infty$. For $u \in C(\partial G)$,
let $\hat{u} =$ $\sup \{f: f$ is subharmonic on $G$ and
$\limsup_{z\rightarrow a} f(z) \leq u(a),\hspace{.02in} a \in
\partial G\}$. Then $\hat{u}$ is harmonic on $G$ and continuous on
$\overline{G}$, and the map $u\rightarrow \hat{u}(z)$ defines a
positive linear functional on $C(\partial G) $ with norm one, so the
Riesz representing theorem  implies that there is a probability
measure $\omega_{z}$ on $\partial G$ such that $  \hat{u}(z) = \int
_{\partial G} u d \omega_{z}, \hspace{.03in} u \in C(\partial G). $
The measure $\omega_{z}$ is called the harmonic measure for $G$
evaluated at $z$.

The harmonic measures evaluated at two different points are
boundedly equivalent . An observation is that a harmonic measure for
$G$ is boundedly equivalent to the arclength measure on $\partial G$
if the boundary $G$ consists of finitely many  smooth Jordan curves.

With the above observation, it is clear that we can  replace the
arclength by a harmonic measure for $G$  in Definition~\ref{d:car}.

Let $G$ be an open subset  in the complex plane and let $A(G)$
denote the function algebra consisting of all functions analytic on
$G$ and continuous on $\overline G$.  Now we define Carleson
measures on a general domain as follows:

\begin{definition}  \label {d:car2}
A positive finite (Borel) measure on a connected open subset $G$ is
called a Carleson measure if there exists a positive constant $C$
such that for all $q\in [1,\infty)$
\[\{ \int_{G} |r|^{q} d\mu\}^{\frac{1}{q}} \leq C\hspace{.03in}
\{ \int_{\partial G} |r|^{q} d \omega\}^{\frac{1}{q}}, \hspace{.2in}
 r\in A(G),
\]
where $\omega$ is a harmonic measure for $G$.
\end{definition}
Why do we use $A(G)$ instead of $R(\overline  G)$ in the definition
above? First, if $G$ is a circular domain, then there is no
difference. Second, when $G$ is a circular domain, we can actually
replace $R(\overline G)$ by $H^{\infty}(G)$, the algebra of bounded
analytic functions on $G$. However, it is impossible to use
$H^{\infty}(G)$ for a general domain $G$,  so $A(G)$ is likely the
largest algebra of analytic functions on $G$ that is most
appropriate for the definition.

Now the question is: Is this a natural extension? Our results show
that it seems the right extension for finitely connected domains.
Since the Carleson measures for infinitely connected domains are
uninteresting and have little meaning in general, our definition
appear to be a satisfactory one also.

In this work, we characterize all Carleson measures on multi-nicely
connected domains.  Moreover, our results, together with our
comments and conjectures, suggest  that this is the largest class of
finitely connected domains on which Carleson measures are
meaningful.

In Section 1, we study Carleson measures on multi-nicely connected
domains, and in Section 2 we extend the result in Section 1 to those
open subsets whose components are multi-nicely connected and the
harmonic measures of the components are mutually singular.
  \cite {dure}, \cite {ga} and \cite {conw} are good references for basic
concepts related to this work.

\section{Carleson Measures On Finitely Connected Domains} \label{s:b}
Let $G$ be a finitely connected domain whose boundary contains no
single point component.  It is a classical result that $G$ is
conformally equivalent to a circular domain in the complex plane.
Since we already know what are the Carleson measures on circular
domains, we wish to characterize Carleson measures on general
domains vis conformal maps.

The following proposition says that the conformal image of a
Carleson measure on a circular domain is also a Carleson measure.

\begin{proposition} \label {p:a}
Suppose $G$ be a domain conformally equivalent to a circular domain
$W$. If a positive finite measure $\mu$ on $G$ is mapped to a
Carleson measure on $W$ by a conformal map from $W$ onto $G$, then
$\mu$ is a Carleson measure.
\end{proposition}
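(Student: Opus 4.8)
The plan is to transport the Carleson inequality from $W$ to $G$ using the conformal map $\psi \colon W \to G$, and the key analytic fact that makes this work is the behavior of harmonic measure and arclength under $\psi$. Let $\mu$ be the measure on $G$, let $\nu = \mu \circ \psi$ be the corresponding measure on $W$, which by hypothesis is a Carleson measure on $W$. So there is a constant $C$ with $\{\int_W |g|^q\, d\nu\}^{1/q} \le C \{\int_{\partial W} |g|^q\, ds_W\}^{1/q}$ for all $g \in R(\overline W) = A(W)$ and all $q \in [1,\infty)$. Given $r \in A(G)$, apply this to $g = r \circ \psi$: since $\psi$ extends continuously to $\overline W$ (as $W$ is a circular domain and $G$ is finitely connected with no point boundary components, the conformal map extends to a homeomorphism of the closures), $g \in A(W)$, and a change of variables gives $\int_W |r\circ\psi|^q\, d\nu = \int_G |r|^q\, d\mu$. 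So the left side is already in the form we want; the work is entirely on the boundary term.

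For the right side I must compare $\int_{\partial W} |r\circ\psi|^q\, ds_W$ with $\int_{\partial G} |r|^q\, d\omega_G$, where $\omega_G$ is a harmonic measure for $G$. The natural route is through harmonic measure on $W$: since $\partial W$ is a finite union of circles, the harmonic measure $\omega_W$ (evaluated at any fixed point) is boundedly equivalent to arclength $ds_W$, as noted in the excerpt. Conformal invariance of harmonic measure gives $\omega_W = \omega_G \circ \psi$ (with evaluation points matched by $\psi$), so $\int_{\partial G} |r|^q\, d\omega_G = \int_{\partial W} |r\circ\psi|^q\, d\omega_W$. Chaining these: $\int_{\partial W} |r\circ\psi|^q\, ds_W \le c \int_{\partial W} |r\circ\psi|^q\, d\omega_W = c\int_{\partial G}|r|^q\, d\omega_G$, where $c$ is the bounded-equivalence constant, independent of $q$ and $r$. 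Combining with the previous paragraph yields $\{\int_G |r|^q\, d\mu\}^{1/q} \le C c^{1/q}\{\int_{\partial G}|r|^q\, d\omega_G\}^{1/q} \le C\max(1,c)\{\int_{\partial G}|r|^q\, d\omega_G\}^{1/q}$, which is exactly Definition 1.2 with constant $C\max(1,c)$.

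The main obstacle is justifying the boundary change of variables and the identification $\omega_W = \omega_G\circ\psi$ with full rigor: one needs that $\psi$ extends to a homeomorphism $\overline W \to \overline G$ so that pushing forward measures on $\partial W$ makes sense, and one needs conformal invariance of harmonic measure in the form that $\psi_*\omega_W^a = \omega_G^{\psi(a)}$. Both are standard for finitely connected domains bounded by finitely many disjoint Jordan curves (here, circles on one side), so I would cite Carathéodory-type extension theorems and the conformal invariance of harmonic measure from the references \cite{conw}, \cite{ga}. A secondary point worth a line: the constant in the Carleson inequality on $W$ does not depend on $q$, so all the estimates above are uniform in $q$, and the final constant $C\max(1,c)$ is genuinely independent of $q$, as required by Definition 1.2.
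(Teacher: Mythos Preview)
Your argument has a genuine gap: you assume that the conformal map $\psi\colon W\to G$ extends continuously (indeed, to a homeomorphism) to $\overline W$, and you justify this by appealing to Carath\'eodory-type extension for domains bounded by Jordan curves. But Proposition~\ref{p:a} does \emph{not} assume that $\partial G$ consists of Jordan curves. The hypothesis is only that $G$ is conformally equivalent to a circular domain, which is equivalent to $G$ being finitely connected with no single-point boundary components; the boundary components may be arbitrary nondegenerate continua (e.g.\ a slit, or a non--locally-connected continuum). In such cases $\psi$ need not extend continuously to $\overline W$, so for $r\in A(G)$ the composite $r\circ\psi$ is in $H^\infty(W)$ but generally \emph{not} in $A(W)$, and your application of the Carleson inequality on $W$---which in Definition~\ref{d:car} is stated for $A(W)$ (equivalently $R(\overline W)$)---is not justified as written. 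The whole point of the later ``multi-nicely connected'' hypothesis in the paper is precisely to control this boundary behavior; it is not available here.

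The paper's proof closes exactly this gap: it observes that $r\circ\psi\in H^\infty(W)$, then uses the classical fact that on a circular domain every $H^\infty$ function can be approximated in $L^q(\omega_W)$ by functions in $A(W)$, which allows the Carleson inequality to pass from $A(W)$ to $r\circ\psi$. The conformal invariance of harmonic measure is then invoked via the a.e.\ boundary values of $\psi$ (which exist by Fatou's theorem) rather than via a continuous extension. Your overall strategy---transport the inequality through $\psi$ and compare arclength with harmonic measure on $\partial W$---is the same as the paper's, but you must insert this approximation step to make it go through at the stated level of generality.
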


\begin{proof}
Suppose $\mu=\tau\circ\phi$, where $\tau$ is a Carleson measure on
$W$ and $\phi$ is a conformal map from $G$ onto $W$. By our
definition, there is a positive constant $C$ such that for all $q\in
[1,\infty)$
\[   \|f\|_{L^{q}(\tau)}  \leq C\hspace{.03in}\|f\|_{L^{q}(\omega_{W})},
\hspace{.1in} f\in A(W),\] where $\omega_{W}$ is a harmonic measure
for $W$. Fix a number $q\in [1,\infty)$.
Let $\omega=\omega_{W}\circ \phi^{-1}$, then it is a harmonic
measure on $G$. Now, let $g\in A(G)$, then $g\circ\phi^{-1}\in
H^{\infty}(W)$. Since $W$ is a circular domain,   it is a classical
result that the boundary value function of $g\circ\phi^{-1}$   can
be approximated by a sequence $\{f_{n}\} \subset A(W)$ in
$L^{q}(\omega_{W})$ norm. If we still use $g\circ\phi^{-1}$ to
denote this boundary value function, then the above inequality
implies that
\[\lim_{n\rightarrow
\infty}\|g\circ\phi^{-1}-f_{n}\|_{L^{q}(\tau)}=0.\] So, we have
\[
 \| g\circ\phi^{-1}\|_{L^{q}(\tau)}=\lim_{n\rightarrow \infty} \| f_{n}\|_{L^{q}(\tau)}  \leq C
 \lim_{n\rightarrow \infty}\|f_{n}\|_{L^{q}(\omega_{W})}= \|
g\circ\phi^{-1}\|_{L^{q}(\omega_{W})}.   \]
Therefore,  it follows that  
 \begin{align*}
 \|g\|_{L^{q}(\mu)} = \|g\circ
 \phi^{-1}\|_{L^{q}(\tau)}
\leq C\hspace{.03in} \|g\circ\phi^{-1} \|_{L^{q}(\omega_{W})}   =
C\hspace{.03in}\|g\|_{L^{q}(\omega)}, \hspace{.04in}\mbox{for each}
\hspace{.04in}g\in A(G).
\end{align*}
Hence, $\mu$ is a Carleson measure on $G$.

\end{proof}
Later on, we will see: not every Carleson measure on $G$ is
generated in this way if $G$ is a general finitely connected domain.
However, as the following Theorem~\ref{t:a} shows that for the class
of multi-nicely connected domains, it is the case.

Let $G$ be a domain conformally equivalent to a circular domain $W$
and let $\alpha$ be a conformal map from $W$ onto $G$. By Fatou's
theorem, $\alpha$ has nontangential limits at almost every point of
$\partial W$ with respect to the arclength measure. So $\alpha$ has
a well-defined boundary value function up to a set of zero arclength
measure. We call $G$ is a multi-nicely connected domain if $\alpha$
is almost 1-1 on $\partial W$ with respect to the arclength measure
on $\partial W$. If $G$ is also simply connected, we call $G$ nicely
connected (a terminology used in \cite[Glicksburg]{glic}).

The following result is the main theorem in \cite{b_apr}:
\vspace{.1in}

\noindent{\bf Theorem A.}  {\em Let $G$ be a bounded open subset
such that each of its components is finitely connected and the
components of $G$ have no single point boundary components. Then in
order that for every function in $H^{\infty}(G)$ be the pointwise
limit of a bounded sequence in $A(G)$ it is necessary and sufficient
that the harmonic measure of the components of $G$ are mutually
singular and each component of $G$ is multi-nicely connected. }

\vspace{.1in} The following theorem gives a characterization of
Carleson measures on multi-nicely connected domains.
\begin{theorem} \label{t:a}
Let $G$ be a multi-nicely connected domain conformally equivalent to
a circular domain $W$.  Let $\alpha$ denote a conformal map of $W$
onto $G$. Then a positive finite measure $\mu$ on $G$ is a Carleson
measure if and only if $\mu\circ\alpha$ is a Carleson measure on
$W$.
\end{theorem}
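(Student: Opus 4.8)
The plan is to establish the two implications separately, using Proposition~\ref{p:a} for the ``if'' direction and the multi-nicely connected hypothesis (via Theorem~A applied to the single component $G$) for the ``only if'' direction. For the ``if'' direction, suppose $\mu\circ\alpha$ is a Carleson measure on $W$; since $\alpha^{-1}$ is a conformal map from $G$ onto $W$ carrying $\mu$ to the Carleson measure $\mu\circ\alpha$, Proposition~\ref{p:a} applies directly (with the roles of $G$ and $W$ exchanged) to conclude that $\mu$ is a Carleson measure on $G$. This direction requires no use of the multi-nicely connected hypothesis at all.

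The substance is the ``only if'' direction: assume $\mu$ is a Carleson measure on $G$ and show $\tau := \mu\circ\alpha$ is a Carleson measure on $W$. Fix $q\in[1,\infty)$. Let $\omega_W$ be a harmonic measure for $W$ and note $\omega := \omega_W\circ\alpha^{-1}$ is a harmonic measure for $G$. The first step is to relate $\|f\|_{L^q(\tau)}$ to $\|f\circ\alpha^{-1}\|_{L^q(\mu)}$ for $f\in A(W)$: by the change of variables $\|f\|_{L^q(\tau)} = \|f\circ\alpha^{-1}\|_{L^q(\mu)}$, but $f\circ\alpha^{-1}$ need only lie in $H^\infty(G)$, not $A(G)$, so the Carleson inequality for $\mu$ does not apply to it verbatim. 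Here is where the hypothesis enters: because $G$ is multi-nicely connected, Theorem~A (with $G$ as its own single component, so the mutual-singularity condition is vacuous) guarantees that every function in $H^\infty(G)$ is the pointwise limit of a bounded sequence in $A(G)$. The key step is then to upgrade this to $L^q$-approximation: given $g=f\circ\alpha^{-1}\in H^\infty(G)$, choose a bounded sequence $\{g_n\}\subset A(G)$ with $g_n\to g$ pointwise on $G$, and argue (using the boundary value functions, which exist a.e.\ with respect to $\omega$, together with the dominated convergence theorem applied on $\partial G$ against the finite measure $\omega$, and separately on $G$ against the finite measure $\mu$) that $\|g_n - g\|_{L^q(\omega)}\to 0$ and $\|g_n-g\|_{L^q(\mu)}\to 0$. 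Applying the Carleson inequality $\|g_n\|_{L^q(\mu)}\le C\|g_n\|_{L^q(\omega)}$ and passing to the limit yields $\|g\|_{L^q(\mu)}\le C\|g\|_{L^q(\omega)}$, and translating back through $\alpha$ gives $\|f\|_{L^q(\tau)}\le C\|f\|_{L^q(\omega_W)}$ for all $f\in A(W)$, i.e. $\tau$ is a Carleson measure on $W$ with the same constant $C$ uniformly in $q$.

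The main obstacle I anticipate is the justification of the $L^q$-convergence of the boundary value functions: passing from pointwise convergence of $\{g_n\}$ on the open set $G$ to convergence of their boundary value functions in $L^q(\omega)$ is not automatic, and one must invoke the structure theory of $H^\infty(G)$ on a multi-nicely connected domain---concretely, that bounded pointwise convergence inside $G$ forces almost-everywhere (w.r.t.\ harmonic measure) convergence of the nontangential boundary values, after which boundedness of the sequence and finiteness of $\omega$ let dominated convergence finish the job. This is precisely the point at which the hypothesis that $G$ is multi-nicely connected (rather than an arbitrary finitely connected domain) is indispensable, and it parallels the use of the classical approximation result already invoked inside the proof of Proposition~\ref{p:a}.
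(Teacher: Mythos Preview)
Your sufficiency direction and the overall architecture of the necessity direction (push $f\in A(W)$ to $f\circ\alpha^{-1}\in H^\infty(G)$, invoke Theorem~A to approximate by $A(G)$-functions, pass the Carleson inequality to the limit) match the paper. The gap is in the specific mechanism you propose for passing to the limit on the boundary.

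You assert that bounded pointwise convergence of $\{g_n\}$ on $G$ forces almost-everywhere convergence of the nontangential boundary values with respect to harmonic measure, after which dominated convergence would give $\|g_n-g\|_{L^q(\omega)}\to 0$. This is false already on the unit disk: take $g_n(z)=z^n$, which converges to $0$ boundedly on $D$, yet the boundary values $e^{in\theta}$ have modulus $1$ everywhere and do not converge a.e.\ to $0$; indeed $\|g_n\|_{L^q(m)}=1$ for all $n$. So neither a.e.\ boundary convergence nor $L^q(\omega)$-convergence of the \emph{same} approximating sequence can be expected, and no amount of ``structure theory of $H^\infty(G)$ on a multi-nicely connected domain'' will produce it, since $D$ is as nicely connected as possible.

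What bounded pointwise convergence on $G$ \emph{does} give is weak-star convergence of the boundary values in $L^\infty(\omega)$: testing against Poisson kernels shows every weak-star cluster point has the right harmonic extension. The paper exploits exactly this. It first observes that $g\circ\alpha^{-1}$ lies in the weak-star closure $A^\infty(G,\omega)$ of $A(G)$ in $L^\infty(\omega)$, and then proves by a short Hahn--Banach/duality argument that $A^\infty(G,\omega)\subset A^q(G,\omega)$ for every $q\in[1,\infty)$. Hence $g\circ\alpha^{-1}$ can be approximated in $L^q(\omega)$ by a \emph{possibly different} sequence $\{f_n\}\subset A(G)$. The Carleson inequality then makes $\{f_n\}$ Cauchy in $L^q(\mu)$, and its $L^q(\mu)$-limit is identified with $g\circ\alpha^{-1}$ via the pointwise convergence on $G$ that $L^q(\omega)$-convergence entails. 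Replacing your a.e.-boundary-convergence step by this weak-star\,$\to$\,$L^q$-closure argument repairs the proof; your $L^q(\mu)$-convergence on $G$ by dominated convergence is fine but is not what drives the estimate.
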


\begin{proof}
Necessity.

Since $G$ is conformally equivalent to $W$ and since $G$ is
multi-nicely connected, there exists a conformal map $\psi$ from $W$
onto $G$ such that the boundary value function of $\psi$  is
univalent almost everywhere on $\partial W$.  We also use $\psi$ to
denote its boundary value function. So there is a measurable subset
$E$ with full harmonic measure so that $\psi$ is univalent on $E$.
Thus, $\psi^{-1}$ is well-defined on $\psi(E)$, which has full
harmonic measure. Let $\omega$ denote a harmonic measure for $G$.
Then $\psi^{-1}$ is a well-defined function in $L^{q}(\omega)$ for
all $1 \leq q \leq \infty$.  Suppose that $\mu$ is a Carleson
measure on $G$. Then there is a positive constant $C$ such that for
all $q\in [1,\infty]$
\[\{ \int_{G} |f|^{q}d\mu\}^{\frac{1}{q}} \leq C\hspace{.03in} \hspace{.02in}
\{\int_{\partial G} |f|^{q} d\omega \}^{\frac{1}{q}}, \hspace{.2in}
r\in A(G).  \] Pick a function $g$ in $A(W)$.  Then $g\circ\psi^{-1}
\in H^{\infty}(G)$.  Since $G$ is a multi-nicely connected, it
follows (Theorem A) that $g\circ\psi^{-1}$ is the limit of a bounded
sequence of functions in $A(G)$. This, in turn, implies that $g$ is
the  weak-star limit of a sequence of functions of $A(G)$ in
$L^{\infty}(\omega)$. Let  $A^{\infty}(G,\omega) $ and
$A^{q}(G,\omega)$    denote the weak-star closure  of $A(G)$ in
$L^{\infty}(\omega)$ and the mean closure of $A(G)$ in
$L^{q}(\omega)$, respectively.  We now  show that $A^{\infty}(G,
\omega) \subset A^{q}(G,\omega)$ for all $q\in [1, \infty)$. In
fact, Suppose that  the inclusion does not hold. Then using
Hahn-Banach theorem, we can find $x\in A^{\infty}(G, \omega)$ and
$h\in A^{q}(G,\omega)^{*}= A^{p}(G,\omega)$ (where $\frac{1}{q}
+\frac{1}{p}=1)$
 such that
\[ \int xh \hspace{.03in}d\omega=1 \hspace{.3in}\mbox{and}\hspace{.3in} \int
fh \hspace{.03in}d\omega=0\hspace{.3in}\mbox{for each}\hspace{.3in}
f\in A^{q}(G,\omega).\] Let $\{g_{n}\}$ be a sequence such that it
weak-star converges to $x$. Then by the definition of weak-star
topology, we have
\[ \int  x f   \hspace{.03in}d\omega= \lim_{n\rightarrow \infty} \int g_{n} f  \hspace{.03in}d\omega\hspace{.3in}\mbox{for
each}\hspace{.3in} f\in L^{1}(\omega).\]
 In particular, the above
equality holds for $h$ and so we get that $\int xh
\hspace{.03in}d\omega=0,$  which is a contradiction.

 Now fix such a number $q$. For any function
$g$ in $A(W)$, choose a sequence $\{f_{n}\}$ in $A(G)$ such that
$f_{n}\rightarrow g\circ\psi^{-1}$ in $A^{q}(G,\omega)$. Then we
have
\begin{align*}
\{ \int_{W} |g|^{q} d\mu\circ\psi\}^{\frac{1}{q}}& = \{ \int_{G}
|g\circ \psi^{-1}|^{q} d\mu\}^{\frac{1}{q}} = \lim_{n\rightarrow
\infty} \{\int_{G} |f_{n}|^{q} d\mu \}^{\frac{1}{q}}
\\&
\leq C\hspace{.015in}\lim_{n\rightarrow \infty} \{\int_{G}
|f_{n}|^{q} d\omega
 \}^{\frac{1}{q}}
 = C\hspace{.02in} \{ \int_{G} |g\circ \psi^{-1}|^{q}
d\omega\}^{\frac{1}{q}} \\& = C\hspace{.02in} \{ \int_{\partial W}
|g|^{q} d\omega\circ\psi\}^{\frac{1}{q}}.
\end{align*}
Since $\omega\circ\psi$ is boundedly equivalent to the arclength
measure on $\partial W$, we conclude that $\mu\circ\psi$ is a
Carleson measure on $W$. Hence, it follows by Proposition~\ref{p:a}
that $\mu\circ \alpha= \mu\circ \psi\circ(\psi^{-1}\circ\alpha)$ is
a Carleson measure on $W$.

Sufficiency. This directly follows from Proposition~\ref{p:a}.

\end{proof}

\begin{corollary} \label{c:invi}
Suppose  $G$ and $V$ are multi-nicely connected domains. Let
$\alpha$ be a conformal map from $G$ onto $V$. Then $\alpha^{-1}$
maps a Carleson measure on $G$ to a Carleson measure on $V$.
\end{corollary}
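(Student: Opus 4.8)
The plan is to reduce both domains to one common circular model and apply Theorem~\ref{t:a} twice. Since $G$ is multi-nicely connected it is, by definition, conformally equivalent to a circular domain $W$; fix a conformal map $\beta$ of $W$ onto $G$. Then $\gamma:=\alpha\circ\beta$ is a conformal map of $W$ onto $V$, so the same circular domain $W$ serves as a conformal model for $V$ as well, and Theorem~\ref{t:a} is available for the pair $(V,\gamma)$ just as for $(G,\beta)$.

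Now let $\mu$ be a Carleson measure on $G$ and put $\nu:=\mu\circ\alpha^{-1}$, a positive measure on $V$; it is finite because $\nu(V)=\mu(\alpha^{-1}(V))=\mu(G)<\infty$, and it is a Borel measure since $\alpha$ is a homeomorphism. The one computation to record is that pulling $\nu$ back along $\gamma$ returns the pullback of $\mu$ along $\beta$: for every Borel $E\subset W$,
\[(\nu\circ\gamma)(E)=\nu(\gamma(E))=\mu\bigl(\alpha^{-1}(\alpha(\beta(E)))\bigr)=\mu(\beta(E))=(\mu\circ\beta)(E),\]
so $\nu\circ\gamma=\mu\circ\beta$; this is just the contravariant functoriality of the pullback, $(\mu\circ\alpha^{-1})\circ(\alpha\circ\beta)=\mu\circ\beta$.

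With this in hand I would chain the two applications of Theorem~\ref{t:a}. Because $\mu$ is a Carleson measure on the multi-nicely connected domain $G$ and $\beta\colon W\to G$ is conformal, Theorem~\ref{t:a} gives that $\mu\circ\beta$ is a Carleson measure on $W$; by the displayed identity, $\nu\circ\gamma$ is a Carleson measure on $W$. Since $\gamma\colon W\to V$ is conformal and $V$ is multi-nicely connected, Theorem~\ref{t:a} (its sufficiency direction, i.e. Proposition~\ref{p:a}) then forces $\nu=\mu\circ\alpha^{-1}$ to be a Carleson measure on $V$, which is the claim.

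I do not anticipate any real difficulty here: the substance is entirely the identity $(\mu\circ\alpha^{-1})\circ(\alpha\circ\beta)=\mu\circ\beta$ together with the fact that one circular domain simultaneously models $G$ and $V$; the only thing to watch is the direction of the maps and the convention that $\mu\circ\phi$ denotes the pullback of a measure, so that the compositions associate as written. In fact the argument proves slightly more, namely that $\mu$ is a Carleson measure on $G$ if and only if $\mu\circ\alpha^{-1}$ is a Carleson measure on $V$, since both statements are equivalent, via Theorem~\ref{t:a}, to $\mu\circ\beta$ being a Carleson measure on $W$.
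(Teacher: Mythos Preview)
Your proof is correct and follows essentially the same route as the paper: choose a circular model $W\xrightarrow{\beta}G$, observe that $\gamma=\alpha\circ\beta$ is then a conformal map of $W$ onto $V$, and apply Theorem~\ref{t:a} once in each direction together with the identity $(\mu\circ\alpha^{-1})\circ\gamma=\mu\circ\beta$. The paper's argument is terser but identical in substance; your version is in fact slightly more careful in recording the pullback identity and in noting that the sufficiency direction only needs Proposition~\ref{p:a}, so the hypothesis that $V$ be multi-nicely connected is not actually used.
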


\begin{proof}
Let $\mu$ be a Carleson measure on $G$. Since $G$ is multi-nicely
connected, there is a circular domain $W$ and a comformal map $\psi$
from $W$ onto $G$. By the above theorem, $\mu\circ\psi$ is a
Carleson measure on $W$. Since $\psi^{-1}\circ\alpha^{-1}$ is a
conformal map from $V$ onto $W$, it follows by Theorem~\ref{t:a}
again that
$\mu\circ\alpha^{-1}=\mu\circ\psi\circ(\psi^{-1}\circ\alpha^{-1})$
is a Carleson measure on $V$.

\end{proof}

\begin{corollary} \label {c:indep}
Let $G$ be a multi-nicely connected  domain.  A positive finite
measure $\mu$ on $G$ is a Carleson measure if and only if there is a
positive constant $C$ such that for some $p\in [1,\infty)$ $
\|f\|_{L^{p}(\mu)} \leq C \|f\|_{L^{p}(\omega)},\hspace{.03in}f\in
A(G). $
\end{corollary}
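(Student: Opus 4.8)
The plan is to reduce the single-exponent inequality to the full range $[1,\infty)$ by transporting the problem to the circular domain $W$ and invoking Theorem~\ref{t:1} (or Theorem~\ref{t:0}), which characterizes Carleson measures on $W$ in terms of a geometric condition that is manifestly independent of the exponent $q$. One direction is trivial: if $\mu$ is a Carleson measure then Definition~\ref{d:car2} gives the inequality for \emph{every} $p\in[1,\infty)$, in particular for one. So the content is the converse.

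First I would fix the data: since $G$ is multi-nicely connected, pick a circular domain $W$ and a conformal map $\psi$ from $W$ onto $G$ whose boundary values are almost univalent with respect to arclength on $\partial W$, and set $\omega=\omega_W\circ\psi^{-1}$, a harmonic measure for $G$ boundedly equivalent to arclength pushed forward. Assume the hypothesis: for some $p\in[1,\infty)$ there is $C>0$ with $\|f\|_{L^p(\mu)}\le C\|f\|_{L^p(\omega)}$ for all $f\in A(G)$. Running exactly the computation in the proof of Theorem~\ref{t:a} (necessity), but with the single exponent $p$ in place of the full range, I would show $\|g\|_{L^p(\mu\circ\psi)}\le C'\|g\|_{L^p(s)}$ for all $g\in A(W)$, where $s$ is arclength on $\partial W$; this uses only that $A^\infty(G,\omega)\subset A^p(G,\omega)$ and that $\omega\circ\psi$ is boundedly equivalent to $s$, both already established. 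Hence $\nu:=\mu\circ\psi$ satisfies the Carleson inequality on $W$ for the single exponent $p$.

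The key step is then: a positive finite measure $\nu$ on a circular domain $W$ satisfying $\|g\|_{L^p(\nu)}\le C'\|g\|_{L^p(s)}$ for all $g\in A(W)$ and one fixed $p\in[1,\infty)$ is automatically a Carleson measure on $W$. I would prove this via Theorem~\ref{t:0}: it suffices to show $\nu(S)\le c\,s(\overline S\cap\partial W)$ for every Carleson square $S\subset W$. Given such an $S$, there is $0\le i\le n$ with $f_i(S)$ a Carleson square $C_h$ on the unit disk $D$; transferring the inequality for $\nu$ through $f_i$ to an inequality for $\nu\circ f_i^{-1}$ on $D$ (again arclength is bounded-equivalent to arclength under the Möbius/affine $f_i$), I reduce to the classical single-exponent fact on the disk: testing the $L^p(\nu\circ f_i^{-1})\le C\,L^p(s)$ inequality against a suitably normalized power $g(z)=(1-\bar\zeta z)^{-2/p}$ with $\zeta$ the center of the arc of $C_h$ (so $|g|\asymp h^{-2/p}$ on $C_h$ and $|g|\gtrsim h^{-2/p}$ on the corresponding boundary arc, with $\|g\|_{L^p(s)}^p\asymp h^{-1}$) yields $\nu\circ f_i^{-1}(C_h)\lesssim h$. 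This is the standard proof that the Carleson inequality for a single $p$ already forces the geometric Carleson condition, and it is the part that carries the real weight of the argument.

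Having shown $\nu=\mu\circ\psi$ is a Carleson measure on $W$, Proposition~\ref{p:a} gives that $\mu=(\mu\circ\psi)\circ\psi^{-1}$ is a Carleson measure on $G$ (note $\psi^{-1}$ is a conformal map from $G$ onto $W$ carrying $\mu\circ\psi$ back to... more precisely, apply Proposition~\ref{p:a} with the roles arranged as in Theorem~\ref{t:a}'s necessity argument: $\mu\circ\psi$ is a Carleson measure on $W$ and $\psi$ maps $W$ onto $G$, so $\mu$ is Carleson on $G$), completing the converse. The main obstacle is the single-exponent disk lemma: one must produce test functions whose modulus is genuinely concentrated on the Carleson square and its boundary arc simultaneously, and control the $L^p$ norm sharply; the power-kernel choice above does this, and the argument is uniform in $p$ once one tracks the exponents.
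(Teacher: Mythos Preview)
Your proposal is correct and follows essentially the same route as the paper: reduce to the circular domain $W$ via the necessity computation of Theorem~\ref{t:a}, then reduce the circular-domain case to the disk, where the single-exponent result is classical. The paper's own proof is three sentences --- it simply cites the disk case as ``well-known,'' says ``combining with the proof of Theorem~\ref{t:1}'' for circular domains, and invokes Theorem~\ref{t:a} --- so your write-up is a faithful unpacking of exactly that argument, with the added bonus that you sketch the kernel test-function proof on the disk that the paper takes for granted. One small remark: when you transfer the single-$p$ inequality from $W$ to $D$ via $f_i$, the right-hand side $\|g\circ f_i\|_{L^p(s_W)}$ picks up contributions from \emph{all} boundary circles $C_j$, not just $C_i$; for $j\neq i$ the image $f_i(C_j)$ is a compact set in the interior of $D$, so your test kernel is bounded there and those extra terms are $O(1)$, harmless compared to the $h^{-1}$ coming from $C_i$ for small $h$. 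With that observation made explicit your argument is complete.
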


\begin{proof}
It is well-known that the conclusion of the corollary is true if $G$
is the unit disk. Combining this fact with the proof of
Theorem~\ref{t:1}, we see that Corollary is also true when $G$ is a
circular domain. Therefore, it follows by Theorem~\ref{t:a} that the
conclusion holds for multi-nicely connected domains.

\end{proof}

Now an interesting question is raised:
\begin{quote}
For what domain $G$ on which every Carleson measure $\mu$ is of the
form $\tau\circ\phi$, where $\tau$ is a Carleson measure on $W$ and
$\phi$ is a conformal map from a circular domain $G$ onto $W$?
\end{quote}

We guess that the answer to the question is: $G$ is a multi-nicely
connected domain. We are unable to prove this conjecture though we
strongly believe it is the case. Nevertheless, we can prove the
following: \vspace{.1in}

{\em Suppose $G$ is a domain such that every Carleson measure $\mu$
on $G$ is of the form $\tau\circ\phi$. Then no point of $\partial G$
can have such a neighborhood $V$ for which $V\setminus \partial G
\subset G$ and $\partial G\cap V$ is smooth Jordan arc. }
\vspace{.1in}

We outline a proof of this fact in the following:

\vspace{.1in}


Suppose such a domain $V$ exists. We first assume that $V$ is a
Jordan domain with smooth boundary.  An application of Morera's
theorem (see \cite{conw0}) shows that $A(G)= A(G\cup V)$. Moreover,
we next show that $V$ is contained in the set of analytic bounded
point evaluations for $A^{q}(G,\omega)$ for all $q\in [1,\infty)$,
where $A^{q}(G,\omega)$ is the closure of $A(G)$ in $L^{q}(\omega)$.

Recall a point $w$ in the complex plane is called an ($abpe$)
analytic bounded point evaluation for $A^{q}(G,\omega)$ if there is
a positive number $c$ and a neighborhood $V_{w}$ of $w$ such that
for every $z\in V_{w}$ $ |f(z)| \leq c
\|f\|_{L^{q}(\omega)},\hspace{.02in}f\in A(G).$ We use $\nabla
A^{q}(G,\omega)$ to denote the set of $abpe$s (consult \cite{conw}
for $abpe$s).

Let $\omega_{1}$ denote the sum of the  harmonic measures of the
(two) components of $V\setminus \partial G$. Then $\omega_{1}$ is
supported on $\partial (V \setminus \partial G)$. By Harnack's
inequality, we see that $V\setminus \partial G$ is contained in
$\nabla A^{q}(V,\omega_{1})$. Since the restrictions of $\omega_{1}$
to the boundaries of the components of $V\setminus \partial G$ are
not mutually singular, we see that $\nabla A^{q}(V,\omega_{1}) \neq
V\setminus \partial G. $ So $\nabla A^{q}(V,\omega_{1})$ is a simply
connected domain such that $V\setminus \partial G \subset  \nabla
A^{q}(V,\omega_{1}) \subset V. $ According to Lemma 3 in \cite{comm}
(or see the main theorem in \cite{appr}), $\nabla
A^{q}(V,\omega_{1})$ must be nicely connected. Since $V\cap \partial
G$ is a smooth Jordan arc, it follows that $\nabla
A^{q}(V,\omega_{1}) = V$. Using the maximum principle, it can be
shown that $\nabla A^{q}(V,\omega_{1})\subset\nabla
A^{q}(G,\omega).$ Hence, we have that $V\subset \nabla
A^{q}(G,\omega).$ This proves the claim.

Evidently, we can use a Jordan domain with smooth boundary to
replace $V$ if it is not one. Moreover, we certainly can require
that $\overline V \subset \nabla A^{q}(G,\omega)$. Therefore, it
follows from the definition of analytic bounded point evaluations
that there is a positive constant $C$ such that  for each $z\in
\overline V$
\[ (*)\hspace{.2in} |f(z)| \leq c \|f\|_{L^{q}(\omega)},\hspace{.2in}f\in A(G).\]
Let $\alpha$ be a conformal map from $W$ onto $G$ and let $S$ be a
Carleson square contained in $W$ such that $\alpha(S) \subset V$.
Choose a positive non-Carleson measure $\nu$ on $W$ so that the
support of $\nu$ is contained in $\overline S$ (the existence of
such a measure is no problem).  Set $\mu=\nu\circ\alpha^{-1}$. Then,
it follows from $(*)$ that $ \|f\|_{L^{q}(\mu)} \leq c_{1}
\|f\|_{L^{q}(\omega)}, \hspace{.2in}f\in A(G),$ where $c_{1}$ is a
positive constant. Thus, by definition $\mu$ is a Carleson measure
on $G$.  Now, our assumption implies that $\mu=\nu\circ\alpha^{-1}=
\tau\circ\phi$, where $\tau$ is a Carleson measure on $W$ and $\phi$
is a conformal map of $G$ onto $W$. Consequently, we have that
$\nu=\tau\circ\phi\circ\alpha$, which implies that $\nu$ is a
Carleson measure on $W$. This contradicts our choice of $\nu$ and
hence the proof is complete.

\begin{conjecture} \label {cj:1}
Let $G$ be a domain conformally equivalent to a circular domain $W$
and let $\phi$ be a conformal map of $G$ onto $W$. If every Carleson
measure on $G$ is of the form  $\tau\circ\phi$, where $\tau$ is a
Carleson measure on $W$, then $G$ is a multi-nicely domain.
\end{conjecture}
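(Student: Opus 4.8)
The plan is to establish the contrapositive: if $G$ is not multi-nicely connected, then $G$ carries a Carleson measure which is not of the form $\tau\circ\phi$ with $\tau$ a Carleson measure on $W$. Fix a conformal map $\alpha$ of $W$ onto $G$ and put $\beta=\phi\circ\alpha$, a conformal self-map of $W$. Carleson measures on a circular domain are preserved by conformal self-maps (this is Theorem~\ref{t:a}, or Corollary~\ref{c:invi}, applied with $G=V=W$), so it suffices to produce a finite positive measure $\mu$ on $G$ that is a Carleson measure on $G$ while $\mu\circ\alpha$ fails to be a Carleson measure on $W$. The hypothesis means that the boundary value function $\alpha^{*}$ is not almost $1$-$1$ on $\partial W$, so the subset of $\partial W$ on which $\alpha^{*}$ has multiplicity at least two has positive length; by a measurable selection argument one extracts disjoint Borel sets $E_{1},E_{2}\subset\partial W$ of positive length with $\alpha^{*}(E_{1})=\alpha^{*}(E_{2})=:K$ up to null sets, where $K\subset\partial G$. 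Heuristically $G$ accumulates to $K$ along two disjoint ``sheets'', and every $f\in A(G)$, being single-valued and continuous on $\overline G\supset K$, has the same boundary values on the two sheets.

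The crux, and precisely the place where the partial result in the text stops, is to show that the set of analytic bounded point evaluations absorbs a chunk of $G$ abutting $K$: there should be a set $S\subset W$ which is (a subset of) a Carleson square, with $\overline S\cap\partial W$ essentially inside $E_{1}$, such that $\overline{\alpha(S)}$ is a compact subset of $\nabla A^{q}(G,\omega)$ for every $q\in[1,\infty)$, with point-evaluation constants bounded uniformly on $\overline{\alpha(S)}$ and in $q$. I would run the smooth-slit computation of the excerpt around a point of density $k$ of $K$: choose a small window $V$ about $k$, let $\omega_{1}$ be the sum of the harmonic measures of the two sheets of $G$ lying inside $V$; since both sheets carry the part of $K$ in $V$, the restrictions of $\omega_{1}$ to the two sheet boundaries are not mutually singular, so Lemma~3 of \cite{comm} (equivalently the main theorem of \cite{appr}) forces $\nabla A^{q}$ of the local, ``filled-in'' picture to contain the local domain properly; the maximum principle then transports this into $\nabla A^{q}(G,\omega)$ exactly as in the excerpt. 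The real difficulty is that $K$ need not be rectifiable --- it may even carry positive area --- so one has to either show that, after a suitable choice of window, the local filled-in domain is still nicely connected so that the cited theorems apply verbatim, or else replace the Carleson square by a ``box over a density subset of $E_{1}$'' and check that the measure so obtained accumulates under $\alpha$ only to a part of $K$ already interior to $\nabla A^{q}(G,\omega)$. I expect this upgrade, from a smooth boundary arc to an arbitrary doubled boundary set of positive length, to be the principal obstacle; it is even conceivable that for sufficiently pathological $K$ the evaluation set does not grow, in which case the non-transported Carleson measure would have to be built by a different device and the argument amended.

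Granting the crux, the rest follows the template already present in the paper. Choose a positive measure $\nu$ on $W$ with support in $\overline S$ that is not a Carleson measure on $W$; such $\nu$ exists by an elementary construction (pile point masses of slowly decaying weight into a sequence approaching $\partial W$ faster than the Carleson scale), as noted in the text. Put $\mu=\nu\circ\alpha^{-1}$, a finite positive measure on $G$ supported in the compact set $\overline{\alpha(S)}\subset\nabla A^{q}(G,\omega)$. Then for every $q\in[1,\infty)$,
\[\Big(\int_{G}|f|^{q}\,d\mu\Big)^{1/q}\ \le\ \Big(\sup_{\overline{\alpha(S)}}|f|\Big)\,\mu(G)^{1/q}\ \le\ C\Big(\int_{\partial G}|f|^{q}\,d\omega\Big)^{1/q},\qquad f\in A(G),\]
with $C$ independent of $q$ by the uniform evaluation bound from the crux, so $\mu$ is a Carleson measure on $G$ in the sense of Definition~\ref{d:car2}. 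Finally, if $\mu=\tau\circ\phi$ for a Carleson measure $\tau$ on $W$, then $\nu=\mu\circ\alpha=\tau\circ\phi\circ\alpha=\tau\circ\beta$, and since $\beta$ is a conformal self-map of $W$ this makes $\nu$ a Carleson measure on $W$, contradicting the choice of $\nu$. Hence not every Carleson measure on $G$ has the form $\tau\circ\phi$, which is the contrapositive of the conjecture.
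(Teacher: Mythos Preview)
The statement you are attempting is Conjecture~\ref{cj:1}, and the paper does \emph{not} prove it. Immediately before the conjecture the author writes that he is ``unable to prove this conjecture though we strongly believe it is the case,'' and what the paper does establish is only the partial result that no boundary point of such a $G$ can have a neighbourhood $V$ with $V\setminus\partial G\subset G$ and $\partial G\cap V$ a smooth Jordan arc. Your proposal is therefore not being compared against a proof in the paper; it is being compared against a conjecture and a partial result.

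Your outline follows exactly the template of that partial result: pass to the circular model, locate a piece of boundary that is ``hit twice'' by $\alpha^{*}$, argue that the abutting region in $G$ lies inside $\nabla A^{q}(G,\omega)$, then load a non-Carleson measure $\nu$ on a Carleson square over that piece and transport it to $G$. The final paragraph of your write-up is essentially the paper's own argument for the smooth-arc case, and it is correct once the crux is granted. But you yourself flag the gap honestly: the step ``$\overline{\alpha(S)}\subset\nabla A^{q}(G,\omega)$ with uniform evaluation constants'' is only carried out in the paper when the doubled boundary contains a smooth Jordan arc, because that is what makes the local filled-in domain $V$ amenable to Lemma~3 of \cite{comm} / the main theorem of \cite{appr}. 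For a general failure of multi-nice connectivity the set $K=\alpha^{*}(E_{1})$ can be extremely irregular, and you have not shown that the local $\nabla A^{q}$ swallows any open set near $K$; your own parenthetical concedes that ``it is even conceivable that for sufficiently pathological $K$ the evaluation set does not grow.'' That is precisely the obstacle the author could not overcome, and it is why the statement is recorded as a conjecture. As written, then, your proposal is a faithful amplification of the paper's heuristic, not a proof: the crux is assumed rather than established.
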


A positive answer to the conjecture will show that Carleson measures
on non-multi-nicely connected domains are not interesting and have
little meaning. So the extend of Carleson measures would be up to
the class of multi-nicely connected domains.

Recall the connectivity of a connected open subset is defined to be
the number of the components of the complement of $G$. The Riemann
mapping theorem insures that all simply connected  domains (i.e.
those domains having the connectivity 1) are conformally equivalent.
However, the same is not true for the domains having higher
connectivity. Because of this, the characterization of Carleson
measures in Theorem~\ref{t:a}  does not give us complete
satisfaction.

Next, we give another characterization of Carleson measures in
Theorem~\ref{t:main}, which, together with Theorem~\ref{t:a},
completes the picture of Carleson measures on  multi-nicely
connected domains.

We first prove the following lemma.

\begin{lemma}  \label {l:equi}
Let $G$ be a multi-nicely connected domain and let $\mu$ be a
positive finite measure on $G$.  Let $E$ be a compact subset of $G$.
Define $\mu_{E}= \mu| (G\setminus E)$.  If $\mu_{E}$ is a Carleson
measure on $G$, then $\mu$ is a Carleson measure on $G$.
\end{lemma}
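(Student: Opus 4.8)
The plan is to reduce the Carleson property of $\mu$ to that of $\mu_E$ by controlling the contribution of $\mu|E$ separately, using that $E$ is a compact subset of $G$ and hence sits in the interior. The key observation is that on a compact subset $E$ of $G$, the point evaluations are all analytic bounded point evaluations for $A^q(G,\omega)$: more precisely, since $E \subset G$ and $G$ is (multi-nicely) connected, Harnack's inequality together with the representation of $A(G)$-functions by their boundary values against $\omega$ gives a constant $c_E$ (depending on $E$ and $q$, but we will fix $q$ at the end) so that $|f(z)| \le c_E \|f\|_{L^q(\omega)}$ for all $z \in E$ and all $f \in A(G)$. This is the analytic-bounded-point-evaluation estimate already used in the outlined proof preceding Conjecture~\ref{cj:1}; here I would cite the same mechanism (Harnack's inequality and the maximum principle, or the abpe discussion referenced to \cite{conw}). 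It follows that
\[
\int_E |f|^q\, d\mu \le c_E^{\,q}\,\mu(E)\,\|f\|_{L^q(\omega)}^{q}, \qquad f \in A(G).
\]

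Next I would combine this with the hypothesis that $\mu_E = \mu|(G\setminus E)$ is a Carleson measure on $G$: there is a constant $C$ with $\{\int_G |f|^q\, d\mu_E\}^{1/q} \le C\{\int_{\partial G}|f|^q\, d\omega\}^{1/q}$ for all $q \in [1,\infty)$ and all $f \in A(G)$. Then for each fixed $q$,
\[
\int_G |f|^q\, d\mu = \int_{G\setminus E}|f|^q\, d\mu_E + \int_E |f|^q\, d\mu
\le \bigl(C^q + c_E^{\,q}\mu(E)\bigr)\,\|f\|_{L^q(\omega)}^{q},
\]
so $\|f\|_{L^q(\mu)} \le \bigl(C^q + c_E^{\,q}\mu(E)\bigr)^{1/q}\|f\|_{L^q(\omega)}$. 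To conclude that $\mu$ is a Carleson measure we need a single constant $C'$, independent of $q$, with $\|f\|_{L^q(\mu)} \le C'\|f\|_{L^q(\omega)}$ for all $q$. For $q \ge 1$ we have $(C^q + c_E^q\mu(E))^{1/q} \le 2^{1/q}\max(C, c_E\mu(E)^{1/q}) \le 2\max(C, c_E\max(1,\mu(G)))$, a bound independent of $q$. Hence $\mu$ satisfies Definition~\ref{d:car2} and is a Carleson measure on $G$.

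The main obstacle is establishing the abpe estimate $|f(z)| \le c_E\|f\|_{L^q(\omega)}$ uniformly for $z$ in the compact set $E$ and with a constant that, for the purpose of the uniform-in-$q$ conclusion, can be taken comparable across all $q \in [1,\infty)$. The first part is standard: harmonic measure $\omega_z$ for $z$ ranging over a compact subset of $G$ is boundedly equivalent to a fixed $\omega = \omega_{z_0}$ by Harnack, and $|f(z)| = |\int f\, d\omega_z| \le \|f\|_{L^1(\omega_z)} \le K_E \|f\|_{L^1(\omega)} \le K_E \|f\|_{L^q(\omega)}$ since $\omega$ is a probability measure — so in fact $c_E = \sup_{z\in E} \tfrac{d\omega_z}{d\omega}$ works for every $q$ simultaneously, which is exactly what makes the uniform-in-$q$ estimate go through. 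Once this is in hand the rest is the elementary splitting above, so I would present the abpe/Harnack step carefully and treat the remainder as routine.
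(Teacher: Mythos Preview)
Your argument is correct and in fact more direct than the paper's. The paper first reduces to the case of a circular domain, then argues qualitatively: if $f_n\to 0$ in $L^q(\omega)$ then $f_n\to 0$ uniformly on the compact set $E$ (a classical fact for circular domains), which together with the Carleson bound for $\mu_E$ forces $f_n\to 0$ in $L^q(\mu)$; this shows the identity $A(G)\to L^q(\mu)$ is continuous for each fixed $q$, and then Corollary~\ref{c:indep} is invoked to upgrade a single exponent to all $q$ with one constant, after which Theorem~\ref{t:a} transfers the conclusion back to a general multi-nicely connected $G$. Your route bypasses both the reduction to circular domains and the appeal to Corollary~\ref{c:indep}: the Harnack/Poisson estimate $|f(z)|\le K_E\|f\|_{L^1(\omega)}\le K_E\|f\|_{L^q(\omega)}$ (valid because $\omega$ is a probability measure) gives a pointwise bound on $E$ that is uniform in $q$ from the outset, so the constant $(C^q+K_E^q\mu(E))^{1/q}$ is bounded independently of $q$ by elementary means. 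This is shorter and uses only that $A(G)$-functions are represented by their boundary values against harmonic measure and that Harnack holds on compact subsets, so the multi-nicely connected hypothesis is not actually needed in your version; the paper's argument, by contrast, leans on the machinery it has just built.
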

\begin{proof}
We first assume that $G$ is a circular domain. Suppose that
$\mu_{E}$ is a Carleson measure $G$.  By definition, there exists a
positive constant $C$ such  that for each $q\in [1,\infty)$
\[ \| f\|_{L^{q}(\mu_{E})} \leq C\hspace{.03in}
\|f\|_{L^{q}(\omega)}, \hspace{.2in} f \in A(G),
\]
where $\omega$ is a harmonic measure of $G$. Fix such  a number $q$.
Let $\{f_{n}\}$ be a sequence of functions in $A(G)$ such that it
converges to zero in $L^{q}(\omega)$. Since $G$ is a circular
domain, it is a classical result that $\{f_{n}\}$ converges to zero
uniformly on compact subsets of $G$. This together with the
inequality above implies that $\lim_{ n\rightarrow \infty} f_{n} =
0\hspace{.03in}\mbox{ in } \hspace{.03in}L^{q}(\mu). $ Let
$A^{q}(G,\mu)$ ($A^{q}(G,\omega)$) be the closure of $A(G)$ in
$L^{q}(\mu)$ ($L^{q}(\omega)$). Then, it follows that the linear
operator $A$, from $A^{q}(G,\mu)$ to $A^{q}(G,\omega)$, densely
defined by $ A(f) = f$ for each $f\in A(G)$ is continuous.
Therefore, we have
\[ \| f\|_{L^{q}(\mu)} \leq \|A\|\hspace{.03in}
\|f\|_{L^{q}(\omega)}, \hspace{.2in} f\in  A(G).
\]
Applying Corollary~\ref{c:indep},  we conclude that $\mu$ is a
Carleson measure on $G$.

Finally,  using Theorem~\ref{t:a}, we conclude that the lemma holds
for an arbitrary multi-nicely connected domain.

\end{proof}

The next theorem gives another characterization of Carleson measures
on nicely connected domains, which also generalizes
Theorem~\ref{t:1}.

\begin{theorem}  \label {t:main}
Suppose that  $G$ is a multi-nicely connected domain. Let
$\{E_{n}\}_{n=0}^{l}$ be the collection of all components of
$\partial G$, and let $\widehat{G}_{n}$ be the simply connected
domain containing $G$ and having $E_{n}$ as its boundary in the
extended plane. Let $f_{n}$ denote a Riemann map of
$\widehat{G}_{n}$ onto $D$. Then a positive finite measure $\mu$ on
$G$ is a Carleson measure if and only if $\mu\circ f^{-1}_{n}$ is a
Carleson measure on $D$ for each $0\leq n \leq l$.
\end{theorem}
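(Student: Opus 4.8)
The plan is to reduce everything to the circular-domain case, where the analogous statement is essentially Theorem~\ref{t:1}, and then transport it through a conformal map using Theorem~\ref{t:a}. First I would fix a circular domain $W$ with boundary circles $C_0,C_1,\dots,C_l$ (ordered so that $C_n$ corresponds to $E_n$) and a conformal map $\psi\colon W\to G$ that is almost univalent on $\partial W$ with respect to arclength; such a $\psi$ exists because $G$ is multi-nicely connected. For each $n$, let $g_n=\frac{z-a_n}{r_n}$ (for the outer boundary) or $g_n=\frac{r_n}{z-a_n}$ (for an inner boundary) be the Möbius map carrying $C_n$ to the unit circle; these are exactly the maps appearing in Theorems~\ref{t:0} and \ref{t:1}. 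By Theorem~\ref{t:a}, $\mu$ is a Carleson measure on $G$ if and only if $\mu\circ\psi$ is a Carleson measure on $W$, and by Theorem~\ref{t:1} the latter holds if and only if $(\mu\circ\psi)\circ g_n^{-1}$ is a Carleson measure on $D$ for every $n$. So the theorem will follow once I show that for each fixed $n$ the measure $(\mu\circ\psi)\circ g_n^{-1}$ is a Carleson measure on $D$ precisely when $\mu\circ f_n^{-1}$ is.

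The key geometric point is that the composition $f_n\circ\psi\circ g_n^{-1}$ is a self-map of $D$ (more precisely, a conformal map of the disk $g_n(W)$ onto the disk $f_n(\widehat G_n)=D$, extended appropriately), and I claim it extends to a conformal self-map of $D$ — equivalently a Möbius automorphism — after correctly identifying the domains. Here is the reasoning I would carry out: $\psi$ maps $W$ onto $G\subset\widehat G_n$, and the boundary circle $C_n$ of $W$ maps (almost everywhere, univalently) onto $E_n=\partial\widehat G_n$; the map $\psi$ composed with the Riemann map $f_n$ of $\widehat G_n$ therefore extends analytically across $C_n$ by Schwarz reflection, giving a conformal map of a neighborhood of $\overline W$ restricted near $C_n$ onto a neighborhood of $\partial D$ in $\overline D$. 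Conjugating by the Möbius maps $g_n$ (on the source) and using that $f_n$ is itself essentially Möbius-like only near $E_n$, one sees that $f_n\circ\psi$ and $g_n$ differ, near $C_n$, by a conformal map that is smooth and univalent up to the boundary. The precise statement I need is: the push-forward of arclength on $C_n$ under $g_n$ (namely arclength on the unit circle) and the push-forward under $f_n\circ\psi$ are boundedly equivalent measures on $\partial D$, and the two maps send Carleson squares near $C_n$ to regions comparable to Carleson squares near $\partial D$. Then Theorem~\ref{t:0} (Carleson measures characterized by the $\mu(S)\le ch$ condition on Carleson squares) converts the Carleson property of $(\mu\circ\psi)\circ g_n^{-1}$ near $\partial D$ into the Carleson property of $\mu\circ f_n^{-1}$ near $\partial D$.

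There remains the contribution away from $E_n$. The measure $\mu\circ f_n^{-1}$ lives on $f_n(G)\subset D$, and the part of $G$ whose $f_n$-image is bounded away from $\partial D$ — equivalently the part of $G$ near the \emph{other} boundary components $E_m$, $m\ne n$ — sits on a compact subset of $D$ under $f_n$ and contributes nothing to any Carleson-square estimate near $\partial D$; for the disk, a finite measure supported on a compact subset of $D$ is automatically Carleson, so by Lemma~\ref{l:equi} (applied with $E$ the relevant compact set) it can be discarded. This is where Lemma~\ref{l:equi} does its work: it lets me localize the Carleson condition to a neighborhood of each $E_n$ separately, so that the global statement ``$\mu$ is Carleson on $G$'' decouples into the $l+1$ local statements ``$\mu\circ f_n^{-1}$ is Carleson near $\partial D$,'' which by the finite-measure-is-Carleson remark is the same as ``$\mu\circ f_n^{-1}$ is Carleson on $D$.''

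I expect the main obstacle to be the boundary-regularity bookkeeping in the second paragraph: making rigorous that $f_n\circ\psi$ and $g_n$ are comparable up to the boundary near $C_n$, i.e.\ that $f_n\circ\psi\circ g_n^{-1}$ extends to a bi-Lipschitz (indeed smooth univalent) self-map of a one-sided neighborhood of $\partial D$, and hence that Carleson squares and arclength transfer with bounded distortion. This requires care because $\psi$ is only almost univalent on all of $\partial W$, but near a single component $C_n$ — which maps onto the single smooth circle $E_n$ of $\widehat G_n$ — the situation is genuinely that of a conformal map between Jordan domains with analytic boundary arcs, so Kellogg–Warschawski-type boundary regularity applies and the distortion is controlled. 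Once this is in hand, the rest is a routine assembly of Theorem~\ref{t:a}, Theorem~\ref{t:1}, Theorem~\ref{t:0}, and Lemma~\ref{l:equi}.
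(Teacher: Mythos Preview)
Your overall strategy---reduce to the circular domain $W$ via Theorem~\ref{t:a}, then compare the two ways of pushing forward to $D$ (via $g_n$ and via $f_n\circ\psi$)---is reasonable, but the justification you give for the comparison step contains an error. You write that ``near a single component $C_n$ --- which maps onto the single smooth circle $E_n$ of $\widehat{G}_n$ --- the situation is genuinely that of a conformal map between Jordan domains with analytic boundary arcs, so Kellogg--Warschawski-type boundary regularity applies.'' But $E_n$ is \emph{not} assumed to be smooth, nor a circle, nor even a Jordan curve: it is an arbitrary boundary component of a multi-nicely connected domain, and the hypothesis on $G$ only says that $\psi$ is almost univalent on $\partial W$. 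So Kellogg--Warschawski regularity for $\psi$ up to $C_n$ is simply unavailable, and the claimed bi-Lipschitz control does not follow from the reason you give.

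The argument can be repaired, but the repair is not the one you sketch. What one should observe is that the \emph{composition} $F_n=f_n\circ\psi\circ g_n^{-1}$ is a conformal map from an annulus $\{1-\delta<|z|<1\}\subset g_n(W)$ onto an annular region in $D$ whose outer boundary component is exactly $\partial D$; since both boundary circles in question are $\partial D$ itself (the roughness of $E_n$ having been cancelled by $f_n$), Carath\'eodory's extension theorem for doubly connected domains plus Schwarz reflection across $\partial D$ shows that $F_n$ extends analytically across $\partial D$, hence is bi-Lipschitz on a closed annulus, and then the Carleson-square comparison goes through. This is a genuine additional argument, not a consequence of smoothness of $E_n$.

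The paper's proof avoids this boundary-regularity bookkeeping altogether. For necessity it chooses, for each $n$, a Riemann map $\lambda$ from $\widehat{G}_n$ onto a disk $\widehat{W}$, applies Corollary~\ref{c:invi} to transport $\mu$ to a Carleson measure on $\lambda(G)$, then invokes Theorem~\ref{t:1} and the fact that any two Riemann maps of a simply connected domain differ by a M\"obius automorphism of $D$. For sufficiency it localizes near each $E_n$ using annuli and a harmonic-measure subordination inequality, then reassembles via Lemma~\ref{l:equi}. The difference in spirit is that the paper works entirely with the measure-theoretic definition (norms in $L^q(\omega)$) and never needs pointwise boundary control of $\psi$, whereas your route goes through the geometric Carleson-square characterization and therefore needs the analytic extension of $F_n$ that you have not yet established.
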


\begin{proof}
Sufficiency.

Suppose that $\mu\circ f^{-1}_{n}$ is a Carleson measure on $D$ for
each $0\leq n\leq  l$.  Fix an integer $n\leq l$ and let $\alpha$ be
a conformal
map of $\widehat G_{n}$ onto $D$. Then
$\alpha(G) \subset D$.  By our hypothesis, $\widehat G_{n}$ is a
nicely connected domain.

Let $\epsilon $ is a sufficiently small positive number so that
\[R_{2\epsilon} = \{z: 1-2\epsilon < |z| < 1 \}\subset \alpha(G) \]
and define   a measure  $ \nu_{n}$ by let $\nu_{n}$  be the
restriction of $\mu\circ \alpha^{-1} $ on  $R_{\epsilon}$.  Then,
our hypothesis implies that $\nu_{n}$ is a Carleson measure on $D$.
An application of Theorem~\ref{t:1} shows that $\nu_{n}$ is a
Carleson measure on $R_{2\epsilon}$.  Since $R_{2\epsilon} \subset
\alpha(G)$,  we can choose a harmonic measure
$\omega_{R_{2\epsilon}}$ for $R_{2\epsilon}$ and a harmonic measure
$\omega_{\alpha(G)}$ for $\alpha(G)$ so that they have the same
evaluating point. Then, there exists a positive constant $c$ such
that for each
$q\in [1,\infty)$ 
\[ \|g\|_{L^{q}(\nu_{n})} \leq c\hspace{.03in} \|g\|_{L^{q}(\omega_{R_{2\epsilon}})}
\leq c\hspace{.03in} \|g\|_{L^{q}(\omega_{\alpha(G)})},\hspace{.3in}
g\in A(\alpha(G)).
\]
Since $G$ is multi-nicely connected and $\alpha$ is a conformal map
onto a circular domain $D$,  it is  not difficult to see that
$\alpha(G)$ is also a multi-nicely connected domain. So
$A(\alpha(G))$ is pointwise boundedly dense in
$H^{\infty}(\alpha(G))$.  Therefore, we have that  for each $q\in
[1,\infty)$
\begin{align*}
 \|f\|_{L^{q}(\nu_{n}\circ\alpha) } = \|f\circ\alpha^{-1}\|_{L^{q}(\nu_n)}
\leq c\hspace{.03in}
\|f\circ\alpha^{-1}\|_{L^{q}(\omega_{\alpha(G)})} \leq
c\hspace{.03in} \|f\|_{L^{q}(\omega)},\hspace{.05in} f\in A(G),
\end{align*}
where $\omega$ is a harmonic measure for $G$. By definition,
$\nu_{n}\circ\alpha$ is a Carleson measure on $G$. Therefore, if we
set $\nu= \sum_{n}^{l} \nu_{n=1}$, then $\nu$ is a Carleson on
measure $G$.

Lastly, applying Lemma~\ref{l:equi}, we conclude that $\mu$ is a
Carleson measure on $G$.

Necessity.

Suppose that $\mu$ is a Carleson measure on $G$. Fix an integer
$n\geq 1$.
 Let $W$ be a circular domain conformally equivalent to $G$. Let $\lambda$
be a conformal map from $\widehat G_{n}$ onto $\widehat W$, which is
the disk having the outer boundary of $W$ as its boundary. According
to Corollary~\ref{c:invi},  $\mu\circ\lambda^{-1}$ is a Carleson
measure on $\lambda(G)$. Now, let $\tau$ denote the Riemann map from
$\widehat W$ onto $D$ that sends the center of $\widehat W$ to the
origin and has positive derivertive at the center point of $\widehat
W$. Then it follows by Theorem~\ref{t:1} that $
\mu\circ\lambda^{-1}\circ\tau^{-1}$ is a Carleson measure $D$. Since
$\mu\circ f_{n}^{-1} = \mu\circ\lambda^{-1}\circ\tau^{-1} \circ
(\tau\circ\lambda\circ f_{n}^{-1}) $ and since
$(\tau\circ\lambda\circ f_{n}^{-1})$ is a conformal map from $D$ to
$D$, it follows by Theorem~\ref{t:a}  that $\mu\circ f_{n}$ is a
Carleosn measure on $D$. So we are done.

\end{proof}

\section{Carleson measures on open subsets}

Let $G$ be an open subset with components $G_{1}, G_{2}, ... G_{n},
...$ on the plane. Recall that a harmonic measure for $G$ is defined
to be $\omega = \sum 2^{-n} \omega_{G_{n}}$, where $\omega_{G_{n}}$
is a harmonic measure for $G_{n}$. Observe that $\omega$ is a
measure supported on $\partial G$.

Note, if $G$ has only finitely many components, then two harmonic
measures for $G$ are still boundedly equivalent. But this is no
longer true when $G$ has infinitely many components.

Now we define Carleson measures on a general open subset in the
following:

\begin{definition}  \label {d:car3}
Let $G$ be an open subset and let $\omega$ be a harmonic measure for
$G$.  A positive finite measure on $G$ is said to be  a Carleson
measure (with respect to $\omega$) if there exists a positive
constant $C$ such that for all $q\in [1,\infty)$
\[\{ \int_{G} |r|^{q} d\mu\}^{\frac{1}{q}} \leq C\hspace{.03in}
\{ \int_{\partial G} |r|^{q} d \omega\}^{\frac{1}{q}}, \hspace{.2in}
 r\in A(G).
\]
\end{definition}

The definition of Carleson measures is still independent to the
choise of the harmonic measure $\omega$ when $G$ has only finitely
many components.

Conjecture~\ref{cj:1} suggest that Carleson measures on
non-multi-nicely connected domains are uninteresting. So in this
section, we shall only consider those open subsets whose components
are {\em all} multi-nicely connected domains.

In the following lemma, $G$ is an open subset with components
$\{G_{n}\}_{n\geq 1}$ and $\omega$ denotes a harmonic measure for
$G$.

\begin{lemma} \label{l:b}
Suppose that  each  $G_{n}$ is a multi-nicely domain and
 the harmonic measures of the components of $G$ are mutually singular.
 Then a positive finite measure $\mu$ on $G$ is a Carleson
measure if and only if there is a positive constant $C$ such that
for each $q\in [1,\infty)$ and for each $n \geq 1$
\[ \{\int_{G_{n}} |r|^{q} d\mu\}^{\frac{1}{q}} \leq C \hspace{.02in}
\{\int_{F_{n}} |r|^{q} d\omega\}^{\frac{1}{q}}, \hspace{.2in} r\in
A(G_{n}),
\]
where $F_{n}$ is  a carrier of a harmonic measure of $G_{n}$ for
each $n$.
\end{lemma}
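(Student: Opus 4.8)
The plan is to prove the two implications separately; sufficiency is essentially a bookkeeping argument, and necessity carries the weight. For sufficiency I would first use mutual singularity to choose the carriers $F_n$ pairwise disjoint with $\omega_{G_m}(F_n)=0$ for $m\ne n$, so that the restriction of $\omega=\sum_m 2^{-m}\omega_{G_m}$ to $F_n$ is exactly $2^{-n}\omega_{G_n}$; since $\sum_n\omega(F_n)=\sum_n 2^{-n}=1=\omega(\partial G)$, the set $\bigcup_n F_n$ carries $\omega$. Then, given $f\in A(G)$, its restriction to each component $G_n$ lies in $A(G_n)$, and summing the hypothesised inequalities over $n$ gives
\[\int_G|f|^q\,d\mu=\sum_n\int_{G_n}|f|^q\,d\mu\le C^q\sum_n\int_{F_n}|f|^q\,d\omega=C^q\int_{\partial G}|f|^q\,d\omega ,\]
so that $\mu$ is a Carleson measure on $G$ with the same constant $C$.

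For necessity, suppose $\mu$ is a Carleson measure on $G$ with constant $C$, and fix $n$ and $r\in A(G_n)$. The first step is to upgrade the Carleson inequality from $A(G)$ to $H^\infty(G)$: arguing exactly as in the proof of Theorem~\ref{t:a} --- invoking Theorem~A (which applies because the $G_n$ are multi-nicely connected and the $\omega_{G_n}$ are mutually singular) together with a Hahn-Banach argument --- one sees that the weak-star closure $A^\infty(G,\omega)$ of $A(G)$ in $L^\infty(\omega)$ is contained in the $L^q(\omega)$-closure $A^q(G,\omega)$ for every $q\in[1,\infty)$. Next, let $h\in H^\infty(G)$ be the function equal to $r$ on $G_n$ and to $0$ on every other component; by Theorem~A, $h$ is a bounded pointwise limit of functions of $A(G)$, so its boundary value function $h^\ast$ lies in $A^\infty(G,\omega)\subseteq A^q(G,\omega)$, and we may pick $g_j\in A(G)$ with $g_j\to h^\ast$ in $L^q(\omega)$. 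By the Carleson inequality $\{g_j\}$ is Cauchy in $L^q(\mu)$, hence $g_j\to\tilde h$ in $L^q(\mu)$ for some $\tilde h$ with $\|\tilde h\|_{L^q(\mu)}\le C\|h^\ast\|_{L^q(\omega)}$; and since convergence in $L^q(\omega_{G_m})$ of boundary values forces uniform convergence on compact subsets of $G_m$ (the classical fact for circular domains, transported by a conformal map since $G_m$ is multi-nicely connected, as already used in the proof of Lemma~\ref{l:equi}), we get $g_j\to h$ pointwise on $G$, whence $\tilde h=h$ $\mu$-a.e. Finally, $h^\ast=0$ $\omega_{G_m}$-a.e.\ for $m\ne n$ while $h^\ast$ agrees with $r$ on $\partial G_n$, so that $\int_{\partial G}|h^\ast|^q\,d\omega=2^{-n}\int_{\partial G_n}|r|^q\,d\omega_{G_n}=\int_{F_n}|r|^q\,d\omega$; hence
\[\int_{G_n}|r|^q\,d\mu=\int_G|h|^q\,d\mu\le C^q\int_{\partial G}|h^\ast|^q\,d\omega=C^q\int_{F_n}|r|^q\,d\omega ,\]
which is the required inequality, with the constant $C$ uniform in $n$ and $q$.

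The step I expect to be the main obstacle is the upgrade $A^\infty(G,\omega)\subseteq A^q(G,\omega)$, together with the identification of the weak-star limit of the Theorem~A approximants with the genuine boundary value function $h^\ast$: this is exactly where both hypotheses enter, and one must check that the argument written for a single domain in the proof of Theorem~\ref{t:a} goes through unchanged for an open set with possibly infinitely many components, the only genuinely new ingredient being the componentwise Poisson representation used to pin down $h^\ast$. Everything else --- the dominated-convergence passages, the bookkeeping for $\omega=\sum 2^{-m}\omega_{G_m}$ and the carriers $F_n$, and the conformal transfer of the fact that $L^q$ control of boundary values gives control on compact sets --- is routine.
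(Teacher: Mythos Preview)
Your proof is correct and follows the same route as the paper: sufficiency by summing the componentwise inequalities, and necessity by extending $r\in A(G_n)$ by zero to $h\in H^\infty(G)$, invoking Theorem~A and the weak-star-to-$L^q$ inclusion (via Hahn--Banach, exactly as in the proof of Theorem~\ref{t:a}) to approximate the boundary function in $L^q(\omega)$ by elements of $A(G)$, and then applying the Carleson inequality. You are in fact more careful than the paper at one point: the paper simply asserts that the $L^q(\omega)$-approximants $f_k$ also converge to $r\chi_{G_n\cup F_n}$ in $L^q(\mu)$, whereas you explicitly identify the $L^q(\mu)$ limit with $h$ via uniform convergence on compacta transported from the circular model.
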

\begin{proof}
Necessary.

Suppose that $\mu$ is a Carleson measure on $G$.  Then
 there is a positive constant $C$ such that for each $q\in [1,\infty)$
\[ \{\int |r|^{q} d\mu\}^{\frac{1}{q}} \leq C \hspace{.02in}
\{\int |r|^{q} d\omega\}^{\frac{1}{q}} , \hspace{.2in} r\in A(G).
\]
By Theorem A, we see that our hypothesis implies that each function
in $H^{\infty}(G)$ is the pointwise limit of a bounded sequence of
functions in $A(G).$

Fix an integer $n$ and pick a function $r\in A(G_{n})$. Let
$\chi_{G_{n}\cup F_{n}}$ denote the characteristic function of
$G_{n}\cup F_{n}$. Then there is a bounded sequence in $A(G)$ such
that it pointwise converges to $r \chi_{G_{n}\cup F_{n}}$ on
$G_{n}$. But this, together with our hypothesis, implies that
$r\chi_{G_{n}\cup F_{n}}$ (actually, the boundary value function
$\chi_{G_{n}\cup F_{n}}$ on $\partial G$) is in the weak-star
closure of $A(G)$ in $L^{\infty}(\omega)$. Thus, it follows that
$r\chi_{G_{n}\cup F_{n}}$  belongs to the mean closure of $A(G)$ in
$L^{q}(\omega)$ for all $q\in [1,\infty)$.  Fix a number $q\in
[1,\infty)$ and choose a sequence of functions $\{f_{k}\}\subset
A(G)$ such that $f_{k} \rightarrow r\chi_{G_{n}\cup F_{n}}$ in $
L^{q}(\omega)$. Then we have that  $f_{k}\rightarrow
r\chi_{G_{n}\cup F_{n}}$ in $L^{q}(\mu)$ as well.  Therefore, we
conclude that for each $r\in A(G_{n})$
\begin{align*}
\{\int_{G_{n}} |r|^{q} d\mu\}^{\frac{1}{q}} & \leq
 \{\int |r \chi_{G_{n}\cup F_{n}}|^{q} d\mu\}^{\frac{1}{q}}  =
\lim_{k\rightarrow\infty}\{\int_{G}|f_{n}|^{q}d\mu\}^{\frac{1}{q}}\\&
\leq C \lim_{k\rightarrow\infty}\{\int_{\partial
G}|f_{n}|^{q}d\omega\}^{\frac{1}{q}} =
C\hspace{.02in}\{\int|r\chi_{G_{n}\cup
F_{n}}|^{q}d\omega\}^{\frac{1}{q}}\\& = C
\hspace{.02in}\{\int_{F_{n}} |r|^{q} d\omega\}^{\frac{1}{q}}.
\end{align*}

Sufficiency. Observe that $\mu(G)=\sum \mu(G_{n})$ and
$\omega(\partial G) =\sum \omega(F_{n})$. So it follows evidently
that the conclusion is true.

\end{proof}


Recall that we define $\omega =\sum_{n=1}
\frac{1}{2^{n}}\omega_{n}$, where $\omega_{n}$ is a harmonic measure
for $G_{n}$. The condition in Lemma~\ref{l:b} is made on $\omega$.
If we replace $\omega|F_{n}$ by $\omega_{n}$ in Lemma~\ref{l:b},
then we have our main theorem of the section:

\begin{theorem} \label {t:c}
Suppose that each $G_{n}$ is multi-nicely connected and the harmonic
measures of the components of $G$ are mutually singular. Then a
positive finite measure $\mu$ on $G$ is a Carleson measure if and
only if each $\mu|G_{n}$ is a Carleson measure and there is a
sequence of positive numbers $\{c_{n}\}_{n=1}$ such that $\{c_{n}\}
\sim \{\frac{1}{2^{n}}\}$
 o($\frac{1}{n}$) and for each $n\geq 1$
\[\|r\|_{L^{q}(\mu|G_{n})} \leq c_{n} \|r\|_{L^{q}(\omega_{n})},\hspace{.05in}
\hspace{.05in} r\in A(G_{n}).
\]
\end{theorem}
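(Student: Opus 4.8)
The plan is to derive this from Lemma~\ref{l:b} by converting between the "global" harmonic measure $\omega$ restricted to the carriers $F_n$ and the "local" harmonic measures $\omega_n$. The key point is that, since the $\omega_n$ are mutually singular, we may choose the carriers $F_n$ to be pairwise disjoint, and then on $F_n$ the measure $\omega$ coincides with $\frac{1}{2^n}\omega_n$ (up to the contribution of the other $\omega_k$, which is zero on $F_n$ by mutual singularity). Hence $\int_{F_n}|r|^q\,d\omega = \frac{1}{2^n}\int|r|^q\,d\omega_n$, and the inequality in Lemma~\ref{l:b},
\[
\|r\|_{L^q(\mu|G_n)} \le C\,\|r\|_{L^q(\omega|F_n)} = C\Bigl(\tfrac{1}{2^n}\Bigr)^{1/q}\|r\|_{L^q(\omega_n)},
\]
is exactly the stated estimate with $c_n = C\,(2^{-n})^{1/q}$. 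So the forward direction (Carleson $\Rightarrow$ the two conditions) reduces to: (i) $\mu|G_n$ is a Carleson measure on $G_n$, which follows because the restriction of a Carleson measure to a component satisfies the defining inequality against $\omega_n$ (again using that one can test with functions in $A(G_n)$ extended via Theorem~A, exactly as in the proof of Lemma~\ref{l:b}); and (ii) the growth condition on $\{c_n\}$, which I address below.

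For the converse, suppose each $\mu|G_n$ is Carleson on $G_n$ and the sequence $\{c_n\}$ with $\{c_n\}\sim\{2^{-n}\}$ (in the sense of the stated $o(1/n)$ comparison) satisfies $\|r\|_{L^q(\mu|G_n)} \le c_n\|r\|_{L^q(\omega_n)}$ for all $n$ and all $q$. The goal is to recover the hypothesis of Lemma~\ref{l:b}, namely a single constant $C$ with $\|r\|_{L^q(\mu|G_n)} \le C\|r\|_{L^q(\omega|F_n)}$ uniformly in $n$. Since $\|r\|_{L^q(\omega|F_n)} = (2^{-n})^{1/q}\|r\|_{L^q(\omega_n)}$, the required inequality becomes $c_n \le C\,(2^{-n})^{1/q}$ for all $q\in[1,\infty)$, equivalently (letting $q\to\infty$) $c_n \le C$ — but we actually need the comparison to hold with the right $q$-dependence. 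The mechanism is that the comparison $\{c_n\}\sim\{2^{-n}\}$ is designed precisely so that $c_n/(2^{-n})^{1/q}$ stays bounded; one extracts $C = \sup_n \sup_q c_n (2^n)^{1/q}$ and checks this is finite using the $o(1/n)$ bound on $\log(c_n 2^n)$, since $\sup_q (2^n)^{1/q} = 2^n$ only matters when $n$ is small, and for large $n$ the factor $(2^n)^{1/q}$ is tamed by the decay built into $c_n \asymp 2^{-n}$. With the uniform $C$ in hand, Lemma~\ref{l:b} gives that $\mu$ is Carleson on $G$.

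The main obstacle — and the part requiring genuine care rather than bookkeeping — is pinning down exactly what the notation $\{c_n\}\sim\{2^{-n}\}$ "$o(1/n)$" means and verifying that it is the correct condition to bridge the $q$-dependent gap between $\omega|F_n$ and $\omega_n$: one must confirm that the sequence condition is both necessary (forced by Carleson-ness through all $q$ simultaneously, which is where the interplay $q\to1$ vs $q\to\infty$ enters) and sufficient (yielding the uniform $C$ for every $q$). The remaining steps — choosing disjoint carriers, identifying $\omega|F_n$ with $2^{-n}\omega_n$, and invoking Theorem~A to justify testing component-wise — are routine given Lemma~\ref{l:b} and Theorem~A, and I would present them briefly before concentrating on the sequence estimate.
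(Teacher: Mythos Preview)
Your overall framework (reduce to Lemma~\ref{l:b}, identify $\omega|F_n$ with $2^{-n}\omega_n$ via mutual singularity, then translate between the global constant $C$ and the sequence $\{c_n\}$) is exactly the paper's, and your sufficiency argument is essentially the paper's: from $\{c_n\}\sim\{2^{-n}\}$ one gets a uniform bound $c_n\le C/2^n$, then uses $2^{-n}\le (2^{-n})^{1/q}$ for $q\ge1$ to recover the hypothesis of Lemma~\ref{l:b} and sum over~$n$.

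The gap is in the necessity direction, and it is the one you yourself flag as the ``main obstacle'' without resolving. From Lemma~\ref{l:b} you obtain
\[
\|r\|_{L^q(\mu|G_n)} \le C\,(2^{-n})^{1/q}\,\|r\|_{L^q(\omega_n)},
\]
and you propose $c_n=C(2^{-n})^{1/q}$. But the theorem demands a single $c_n$, independent of $q$, with $c_n\asymp 2^{-n}$. Your coefficient $C(2^{-n})^{1/q}$ ranges from $C\cdot 2^{-n}$ (at $q=1$) up to $C$ (as $q\to\infty$), so the only $q$-free choice that works for all $q$ by this reasoning alone is $c_n=C$, which does not decay like $2^{-n}$. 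The ``interplay $q\to1$ vs $q\to\infty$'' you allude to does not by itself force the small constant.

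The missing ingredient is Corollary~\ref{c:indep}: on a multi-nicely connected domain, if the Carleson inequality holds for \emph{some} exponent with constant $c$, then it holds for \emph{all} exponents with the same constant. The paper's proof takes $q=1$ in the display above to get $\|r\|_{L^1(\mu|G_n)}\le (C/2^n)\|r\|_{L^1(\omega_n)}$, observes that this already makes $\mu|G_n$ Carleson on $G_n$, and then invokes Corollary~\ref{c:indep} to upgrade the constant $C/2^n$ from $q=1$ to all $q\in[1,\infty)$. That is the step that produces a $q$-independent $c_n=C/2^n$ with the required decay; without it the necessity half does not go through.
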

\begin{proof}
Necessity.

Suppose that $\mu$ is a Carleson measure on $G$. Let $F_{n}$ denote
a carrier of $\omega_{n}$. According to Lemma~\ref{l:b}, there is a
positive constant $C$ such that  for each  $n\geq 1$ and for each
$q\in [1,\infty)$
\[\|r\|_{L^{q}(\mu|G_{n})} \leq C\hspace{.02in}
\|r\|_{L^{q}(\omega|F_{n})}, \hspace{.1in} r\in A(G_{n}).
\]
By our hypothesis, we have $\omega_{n} \perp \omega_{m}$ (if $n\neq
m$). So the definition of $\omega$ implies that $\omega|F_{n} =
\frac{\omega_{n}}{2^{n}}$. It follows that for each $n\geq 1$ and
each $q\in [1,\infty)$
\[\|r\|_{L^{q}(\mu|G_{n})} \leq \frac{C}{2^{\frac{n}{q}}}
 \|r\|_{L^{q}(\omega_{n})},\hspace{.05in}
\hspace{.05in} r\in A(G_{n}).
\]
In particular, this says that  $\mu|G_{n}$ is a Carleson measure on
$G_{n}$ for each $n$. Moreover, when $q=1$, we have that for each
$n\geq 1$
\[(*)\hspace{.5in} \|r\|_{L^{1}(\mu|G_{n})} \leq \frac{C}{2^{n}}
 \|r\|_{L^{1}(\omega_{n})},\hspace{.05in}
\hspace{.05in} r\in A(G_{n}).
\]
Since $\mu|G_{n}$ is a Carleson measure and $G_{n}$ multi-nicely
connected, it follows by Corollary~\ref{c:indep}  that
for each $n\geq 1$ and for all $q\in [1,\infty)$
\[\|r\|_{L^{q}(\mu|G_{n})} \leq \frac{C}{2^{n}}
 \|r\|_{L^{q}(\omega_{n})},\hspace{.05in} \hspace{.05in} r\in A(G_{n}).
\]
Now, if we let $c_{n}= \frac{C}{2^{n}}$ for each $n \geq 1$, then
$\{c_{n}\} \sim \{ \frac{1}{2^{n}}\}$ $o(\frac{1}{n})$, and moreover
\[\|r\|_{L^{q}(\mu|G_{n})} \leq c_{n} \|r\|_{L^{q}(\omega_{n})},\hspace{.05in}
\hspace{.05in} r\in A(G_{n}).
\]

Sufficiency.

Suppose that there is a sequence of positive numbers
 $\{c_{n}\}_{n=1}$ such that
\[\{c_{n}\} \sim \{\frac{1}{2^{n}}\}\hspace{.04in} o(\frac{1}{n})\hspace{.04in}
\mbox{ and }\hspace{.02in}
 \|r\|_{L^{q}(\mu|G_{n})} \leq c_{n} \|r\|_{L^{q}(\omega_{n})},\hspace{.05in}
\hspace{.03in} r\in A(G_{n}).
\]
Then there is a sufficiently large integer $N$ such that $c_{n} \leq
\frac{C^{'}}{2^{n}} $ when $ n\geq N $,  where $ C^{'}$ where is a
positive constant.

Let $C^{''} = \max_{i\leq N-1} \{C^{'}$, $c_{i}\}$ and set $ C=
2^{N}C^{''}$. Then $c_{n} \leq \frac{C}{2^{n}}$ for each $n\geq 1$.
Thus, we have that for each $q\in [1,\infty)$
\begin{align*}
 \leq C(\frac{1}{2^{n}})^{\frac{1}{q}} \|r\|_{L^{q}(\omega_{n})}
 = C\|r\|_{L^{q}(\omega|G_{n})} \hspace{.03in}
\mbox{ for }\hspace{.03in} r\in A(G_{n}).
\end{align*}
Consequently, for each $r\in A(G)$ we get that
\begin{align*}
\|r\|_{L^{q}(\mu)} = \{\sum_{n=1} \int |r|^{q}
d\mu|G_{n}\}^{\frac{1}{q}}    \leq \{C^{q}\sum_{n=1} \int |r|^{q}
d\omega|G_{n}\}^{\frac{1}{q}}
= C \|r\|_{L^{q}(\omega)}.
\end{align*}
Therefore, $\mu$ is a Carleson measure on $G$.

\end{proof}

\begin{remark}
When $n=\infty $, the definition of Carleson measures we employ as
in Definition~\ref{d:car3} depends on the choice of the harmonic
measure $\omega$. In fact, we define a harmonic measure as:
$\omega=\sum \frac{1}{2^{n}} \omega_{n}$.  Clearly, we can chose
another sequence of positive numbers $\{b_{n}\}$ so that $\omega_{b}
=\sum  b_{n}\omega_{n}$ is also a probability measure and
$\omega_{b}$ clearly can also be called a harmonic measure. Though
we still have that  $[\omega_{b}]=[\omega]$, $\omega_{b}$ and
$\omega$ are no longer boundedly equivalent in general.  So a
measure is a Carleson measure (with respect to $\omega_{b}$) if and
only if the restriction of $\mu$ to each of the components of $G$ is
a Carleson measure and there is a sequence of positive numbers
$\{c_{n}\}_{n=1}^{\infty}$ such that $\{c_{n}\} \sim \{b_{n}\}$ as
$n\rightarrow \infty$.

\end{remark}

Carleson measures in Theorem~\ref{t:c}  have a nice application to
operator theory. In \cite{nsim} the author characterizes a class of
operators similar to the operator of multiplication by $z$ on the
Hardy space $H^{2}(G)$ (where $G$ has infinitely many components)
via Carleson measures on $G$.

For an open subset with finitely many components, we have:

\begin{corollary}
Let $G=\sum_{n=1}^{k} G_{n}$. Suppose each $G_{n}$ is multi-nicely
connected and the harmonic measures of the components of $G$ are
mutually singular. Then a positive measure $\mu$ on $G$ is a
Carleson measure if and only if the restriction to $G_{n}$ is a
Carleson measure for each $n$.
\end{corollary}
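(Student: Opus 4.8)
The plan is to read this off from Theorem~\ref{t:c}. When $G$ has only finitely many components, the only extra requirement in that theorem beyond ``each $\mu|G_n$ is a Carleson measure'' is the existence of constants $c_n$ with $\|r\|_{L^q(\mu|G_n)}\le c_n\|r\|_{L^q(\omega_n)}$ and $\{c_n\}\sim\{\frac{1}{2^n}\}$ o($\frac{1}{n}$); but for a finite index set $1,\dots,k$ this asymptotic relation is vacuous, so the theorem collapses to exactly the asserted equivalence. Concretely, the necessity direction is immediate: if $\mu$ is a Carleson measure on $G$, Theorem~\ref{t:c} already yields that each $\mu|G_n$ is a Carleson measure on $G_n$.

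For sufficiency, assume each $\mu|G_n$ is a Carleson measure on $G_n$. Since $G_n$ is multi-nicely connected, Corollary~\ref{c:indep} promotes this to the full family of estimates: for each $n$ there is $C_n>0$ with
\[ \|r\|_{L^q(\mu|G_n)}\le C_n\,\|r\|_{L^q(\omega_n)},\qquad r\in A(G_n),\ q\in[1,\infty). \]
Because $k<\infty$, set $C=\max_{1\le n\le k}2^nC_n$, so $C_n\le C/2^n$ for all $n$, and put $c_n=C_n$. The finite sequence $\{c_n\}_{n=1}^k$ then satisfies both the norm inequality and (trivially) the relation $\{c_n\}\sim\{\frac{1}{2^n}\}$ o($\frac{1}{n}$), so Theorem~\ref{t:c} gives that $\mu$ is a Carleson measure on $G$.

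Alternatively, one may argue straight from Lemma~\ref{l:b}: the mutual singularity of the $\omega_n$ forces $\omega|F_n=\frac{1}{2^n}\omega_n$, and since there are only finitely many of the positive scalars $\frac{1}{2^n}$ they are bounded above and below, so $\omega|F_n$ is boundedly equivalent to $\omega_n$ for each $n$; the hypothesis of Lemma~\ref{l:b} is then precisely the statement that each $\mu|G_n$ is a Carleson measure on $G_n$, and one again invokes Corollary~\ref{c:indep} to pass between a single exponent $q$ and all of $[1,\infty)$. There is no genuine obstacle here: the single point deserving a word of care is that the quantitative control on the constants $c_n$ which is essential in the infinite-component case --- needed to keep $\sum_n\int_{G_n}|r|^q\,d\mu$ comparable to $\sum_n\int_{G_n}|r|^q\,d\omega$ --- is automatic once the number of components is finite.
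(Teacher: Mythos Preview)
Your proposal is correct and matches the paper's intent: the corollary is stated immediately after Theorem~\ref{t:c} with no separate proof, precisely because the asymptotic condition on the constants $c_n$ becomes vacuous for a finite index set, reducing Theorem~\ref{t:c} to the claimed equivalence. Your invocation of Corollary~\ref{c:indep} to obtain uniform-in-$q$ constants for each component, and your remark that the only delicate point (control of the $c_n$) is automatic here, are exactly the observations the paper is relying on implicitly.
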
proposition

We finish the paper by post the following conjecture closely related
to Conjecture~\ref{cj:1}.

\begin{conjecture}
Suppose $G$ be an open subset such that if $\mu$ is a Carleson
measure, then the restrictions of $\mu$ to each of the components of
$G$ are Carleson measures. Then the harmonic measures of the
components of $G$ are mutually singular.
\end{conjecture}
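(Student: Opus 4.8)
The plan is to prove the contrapositive: if two components, say $G_1$ and $G_2$, have harmonic measures that are \emph{not} mutually singular, then we can construct a Carleson measure on $G$ whose restriction to $G_1$ (or $G_2$) fails to be a Carleson measure. The intuition is exactly the mechanism exploited in the outlined proof of the fact preceding Conjecture~\ref{cj:1}: when harmonic measures overlap, the algebra $A(G)$ is too small to ``see'' the individual components separately, and one can smuggle a badly behaved measure on one component past the Carleson test by hiding it behind the boundary values coming from another component.

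First I would make precise what non-mutual-singularity buys us. Let $\omega_1,\omega_2$ be harmonic measures for $G_1,G_2$, and suppose $\omega_1$ and $\omega_2$ are not mutually singular, so there is a common carrier set $F$ with $\omega_1(F)>0$ and $\omega_2(F)>0$ on which the two measures are mutually absolutely continuous. By the analogue of Theorem~A (and the maximum-principle/$abpe$ argument sketched in the discussion after Corollary~\ref{c:indep}), the overlap of the carriers forces a point-evaluation phenomenon: there should be an open set $V$ meeting $\partial G_1 \cap \partial G_2$ such that every $z \in V$ is an analytic bounded point evaluation for $A^q(G,\omega)$ for all $q \in [1,\infty)$ --- that is, $|f(z)| \le c\,\|f\|_{L^q(\omega)}$ for $f \in A(G)$. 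I would establish this by combining Harnack's inequality on the components of $V \setminus \partial G$ with the nicely-connected structure forced by Lemma~3 of \cite{comm} (equivalently the main theorem of \cite{appr}), precisely as in the earlier outline in this section.

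Next, using such an $abpe$-neighborhood $V$ (which we may assume has $\overline V \subset \nabla A^q(G,\omega)$ and meets $G_1$), I would choose a positive measure $\nu$ supported on a compact subset of $G_1 \cap V$ that is \emph{not} a Carleson measure on $G_1$ --- this is always possible by Theorem~\ref{t:0}, since one can pile mass into a small region of a single Carleson square so that $\nu(S)/h$ is unbounded, while $G_1$ being multi-nicely connected is compatible with such a choice. Because $\nu$ is supported inside $\overline V$, the point-evaluation inequality $(*)$ gives $\|f\|_{L^q(\nu)} \le c_1 \|f\|_{L^q(\omega)}$ for all $f \in A(G)$, so $\nu$, viewed as a measure on $G$, \emph{is} a Carleson measure on $G$ by Definition~\ref{d:car3}. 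But $\nu = \nu|G_1$ is by construction not a Carleson measure on $G_1$, contradicting the hypothesis that every Carleson measure on $G$ restricts to Carleson measures on the components. This contradiction forces the harmonic measures of the components to be mutually singular.

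The main obstacle I anticipate is the first technical step: rigorously producing the $abpe$-neighborhood $V$ from the mere failure of mutual singularity. One must pass from ``$\omega_1,\omega_2$ share a carrier'' to an honest geometric configuration (a Jordan subdomain straddling $\partial G_1 \cap \partial G_2$ on which point evaluations are bounded), and this requires knowing that the shared carrier is supported on a boundary arc of positive length where both domains behave like nice Jordan domains --- which is where the multi-nicely-connected hypothesis and the cited approximation/nicely-connected results \cite{comm,appr} do the real work. A subtlety is that, a priori, the common carrier could be a set of measure zero in arclength even if positive in harmonic measure; handling that case (or showing it cannot coincide with non-mutual-singularity for multi-nicely-connected components) is the delicate point, and it is presumably why the statement is posed as a conjecture rather than a theorem.
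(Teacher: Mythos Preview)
The statement you are attempting to prove is posed in the paper as a \emph{conjecture}, not a theorem; the paper does not prove it. What the paper does provide, immediately after stating the conjecture, is the partial remark that the same argument used before Conjecture~\ref{cj:1} shows: if $G$ satisfies the hypothesis, then no point of $\partial G$ has a neighborhood $V$ with $V\setminus\partial G\subset G$ and $\partial G\cap V$ a smooth Jordan arc. That is the full extent of the paper's ``proof.''

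Your proposal is therefore not a competing proof but an attempted resolution of an open problem. Your strategy does align with the paper's partial argument: both proceed by locating an $abpe$-neighborhood $V$ straddling a shared piece of boundary, then planting inside $V$ a measure that is Carleson on $G$ (via the point-evaluation bound $(*)$) yet fails to be Carleson on a single component. Where the paper stops---at the case where the shared boundary is a smooth Jordan arc, so that the $abpe$-neighborhood is handed to you geometrically---you try to push further and manufacture $V$ from the bare hypothesis that $\omega_1$ and $\omega_2$ are not mutually singular.

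You have correctly isolated the genuine gap: passing from ``$\omega_1$ and $\omega_2$ share a carrier of positive measure'' to ``there is an open $V$ of $abpe$'s for $A^q(G,\omega)$ meeting both components'' is not supplied by the results you cite. The argument from \cite{comm,appr} used before Conjecture~\ref{cj:1} needs as input an honest Jordan domain $V$ with $V\setminus\partial G\subset G$ and a smooth arc $V\cap\partial G$; it does not produce such a $V$ from measure-theoretic overlap alone. A common carrier for $\omega_1$ and $\omega_2$ can in principle be a totally disconnected set with no interior in $\partial G$, and nothing in your outline rules this out or handles it. Your closing paragraph acknowledges exactly this, and it is indeed why the paper leaves the statement as a conjecture. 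As written, then, your proposal reproduces the paper's partial result and correctly diagnoses the obstruction, but does not close it.
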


Using the same proof as  in the comments before
conjecture~\ref{cj:1}, we can show that if $G$ satisfies the
condition of the above conjecture, there is no point in $\partial G$
can have a neighborhood $V$ for which $V\setminus
\partial G \subset G$ and $\partial G\cap V$ is a smooth Jordan arc.

\end{document}